    \let\geq\geqslant
\newcommand{\bmu}{\bm{u}}
\newcommand{\bmx}{\bm{x}}
\newcommand{\bmw}{\bm{w}}
\newcommand{\bmy}{\bm{y}}
\DeclareMathOperator{\im}{im}
\DeclareMathOperator{\rank}{rank}
\let\geq\geqslant
\newcommand{\calA}{\ensuremath{\mathcal{A}}}
\newcommand{\calJ}{\ensuremath{\mathcal{J}}}
\newcommand{\calL}{\ensuremath{\mathcal{L}}}
\newcommand{\calN}{\ensuremath{\mathcal{N}}}
\newcommand{\calP}{\ensuremath{\mathcal{P}}}
\newcommand{\calV}{\ensuremath{\mathcal{V}}}
\newcommand{\bmat}{\begin{matrix}}
\newcommand{\emat}{\end{matrix}}
\newcommand{\bbm}{\begin{bmatrix}}
\newcommand{\ebm}{\end{bmatrix}}
\newcommand{\bbma}{\begin{bmatrix*}[r]}
\newcommand{\ebma}{\end{bmatrix*}}
\newcommand{\bpm}{\begin{pmatrix}}
\newcommand{\epm}{\end{pmatrix}}
\newcommand{\bvm}{\begin{vmatrix}}
\newcommand{\evm}{\end{vmatrix}}
\newcommand{\bse}{\begin{subequations}}
\newcommand{\ese}{\end{subequations}}
\newcommand{\beq}{\begin{equation}}
\newcommand{\eeq}{\end{equation}}
\newcommand{\ben}{\renewcommand{\labelenumi}{\arabic{enumi}.}
\renewcommand{\theenumi}{\arabic{enumi}}\begin{enumerate}}
\newcommand{\een}{\end{enumerate}}
\newcommand{\beni}{\renewcommand{\labelenumi}{\roman{enumi}.}
\renewcommand{\theenumi}{\roman{enumi}}\begin{enumerate}}
\newcommand{\eeni}{\end{enumerate}}
\newcommand{\bena}{\renewcommand{\labelenumi}{\alph{enumi}.}
\renewcommand{\theenumi}{\alph{enumi}}\begin{enumerate}}
\newcommand{\eena}{\end{enumerate}}
\newcommand{\bit}{\begin{itemize}}
\newcommand{\eit}{\end{itemize}}
\newcommand{\bthe}{\begin{theorem}}
\newcommand{\ethe}{\end{theorem}}
\newcommand{\blem}{\begin{lemma}}
\newcommand{\elem}{\end{lemma}}
\newcommand{\bprop}{\begin{proposition}}
\newcommand{\eprop}{\end{proposition}}
\newcommand{\bex}{\begin{example}}
\newcommand{\eex}{\end{example}}
\newcommand{\bas}{\begin{assumption}}
\newcommand{\eas}{\end{assumption}}
\newcommand{\bre}{\begin{remark}}
\newcommand{\ere}{\end{remark}}
\newcommand{\bcor}{\begin{corollary}}
\newcommand{\ecor}{\end{corollary}}
\newcommand{\bdfn}{\begin{definition}}
\newcommand{\edfn}{\end{definition}}
\newcommand{\bcon}{\begin{conjecture}}
\newcommand{\econ}{\end{conjecture}}
\newcommand{\pset}[1]{\ensuremath{\{#1\}}}
\newcommand{\zset}{\ensuremath{\pset{0}}}
\newcommand{\qand}{\quad\text{ and }\quad}
\newcounter{todocounter}
\newtheorem{theorem}{Theorem}
\newtheorem{lemma}[theorem]{Lemma}
\newtheorem{cor}[theorem]{Corollary}
\newtheorem{prop}[theorem]{Proposition}
\theoremstyle{definition}
\newtheorem{definition}[theorem]{Definition}
\newtheorem{example}[theorem]{Example}
\newtheorem{conjecture}[theorem]{Conjecture}
\newtheorem{remark}[theorem]{Remark}
\title{Informativity of noisy data for structural properties of linear systems} 
\author{Jaap Eising, Harry L. Trentelman~\IEEEmembership{Fellow,~IEEE}
	\thanks{The authors are with the Bernoulli Institute for Mathematics, Computer Science, and Artificial Intelligence, University of Groningen, Nij\-enborgh 9, 9747 AG, Groningen, The Netherlands. (email: {\footnotesize{\tt j.eising@rug.nl; h.l.trentelman@rug.nl}}).}
}
\begin{document}

\maketitle
\begin{abstract}
This paper deals with developing tests for checking whether an unknown system has certain structural properties. The tests that we are aiming at are in terms of noisy input-state-output data obtained from the unknown system. Since, in general, the data do not determine the unknown system uniquely, many systems are compatible with the same set of data. Therefore we can not apply system identification and apply existing, model based, tests. Instead, we will use the concept of informativity, and establish tests for informativity of the given noisy data. We will do this for a range of system properties, among which strong observability and detectability and strong controllability and stabilizability. These informativity tests will be in terms of rank tests on polynomial matrices that can be constructed from the noisy data. We will also set up a geometric framework for informativity analysis. Within that framework we will give geometric tests for informativity for strong observability, observability, and left-invertibilty. 
\end{abstract}

\section{Introduction}
In this paper we will study the problem of determining whether a given unknown dynamical system has  certain structural properties, based on noisy data obtained from this system.  One way to approach this problem is to use the data to identify an explicit model representing the system, and apply a suitable, model based, test to this model. In the present paper we will approach the problem from a different angle, and establish tests, directly on the noisy data, to check structural system properties. 

As a major tool, we will use the general framework of informativity of data, recently introduced in \cite{vanWaarde2020}. In that paper, it was shown that the data-driven approach can also be useful if the data do not give sufficient information to identify the `true' unknown system. In that case, a given set of data gives rise to a whole {\em family of system models}, all of which could have given the same data. On the basis of the data it is then impossible to distinguish between models, and a given system property will hold for the 'true' system model only if its holds for all models compatible with the data. Formalizing this, a set of data is called {\em informative} for a given system property if the property holds for all systems that could have given this set of data. 

In \cite{vanWaarde2020}, tests were established for checking whether a given set of noiseless data is informative for controllability, stability and stabilizability. In \cite{Eising2020}, informativity of noiseless data for observability was studied. 
In the present paper we will deal with informativity of noisy data.  We will establish informativity tests for several relevant structural system properties. More specifically, we will study informativity for observability and detectability, strong observability and detectability, strong controllability and stabilizability, and invertibility of linear systems. 
These structural properties are relevant in in a wide range of observer, filter and control design problems. For definitions and extensive treatments we refer to \cite{Morse1973, Silverman1968, Wonham1985,Schumacher1983,Hautus1983,Molinari1976}, and \cite{Trentelman2001} and the references therein.

Analysis of system properties based on data has been studied also in \cite{Wang2011,Niu2017,Zhou2018,Liu2014}, which deal with data-based controllability and observability analysis. Whereas in the present paper general data sets are allowed, these references impose restrictions on the data. The paper \cite{Park2009} deals with the problem of determining stability properties of input-output systems using time series data. More recently, there has also been an increasing interest in the problem of verifying dissipativity on the basis of system data. This problem has, for example, been addressed in \cite{Maupong2017b,Romer2019,Koch2020,Berberich2019}.

\section{Problem formulation}\label{sec:prob}
In this paper we will consider the linear discrete-time input-state-output system with noise given by 
\begin{subequations} \label{eq:full system}
	\begin{align} \bmx(t+1) &= A_{\rm true} \bmx(t)+B\bmu(t) + E\bmw(t), \label{eq:def sys ABCDE_a}\\ \bmy(t) &= C\bmx(t) + D\bmu(t) +F \bmw(t), \label{eq:def sys ABCDE_b}\end{align}
\end{subequations}
where $\bmx \in \mathbb{R}^n$ is the state, $\bmu \in \mathbb{R}^m$ a control input, $\bmy \in \mathbb{R}^p$ an output, and $\bmw \in \mathbb{R}^r$ is unknown noise.  We assume that $A_{\rm true}$ is an unknown matrix, but that the matrices $B$, $C$, $D$ and $E,F$ are known.  The assumption that
these matrices are known is reasonable, for example in networked systems, in which the input and output nodes are given, but the interconnection topology is unknown.  Typically, in that context, the matrices $B,C$ and $D$ would be matrices whose columns only contain $0$'s and $1$'s, with in each column at most one entry equal to 1. The term $E\bmw(t)$ represents process noise, whereas $F\bmw(t)$ represents measurement noise. The special case that $E =0$ and $F =0$ is called the {\em noiseless case}. 

We assume that we have input-state-output data concerning this unknown `true' system in the form of samples of $\bmx$, $\bmu$ and $\bmy$ on a given finite time interval $\{0,1 \ldots ,T\}$. These data are denoted by 
\begin{subequations}\label{eq:UXdata}
\begin{align}
U_-& := \bbm u(0) & u(1) & \cdots & u(T-1)\ebm, \\
X  & := \bbm x(0) & x(1) & \cdots & x(T)\ebm, \\
Y_-& := \bbm y(0) & y(1) & \cdots & y(T-1)\ebm. 
\label{eq: UXdata2}
\end{align}
\end{subequations}
It will be assumed that the data \eqref{eq:UXdata} are `harvested' from the true system \eqref{eq:full system}, meaning that there exists some matrix 
$$W_- = \bbm w(0) & w(1) & \cdots & w(T -1)\ebm$$ 
such that 
\begin{subequations}  \label{e:data}
\begin{align}
X_{+} & = A_{\rm true} X_{-} + B U_-  + E W_-, \\
Y_- & = C X_- + D U_- + FW_-, 
\end{align}
\end{subequations}
where we denote
\begin{align*}
X_{-}& := \bbm x(0) & x(2) & \cdots & x(T -1) \ebm, \\
X_{+}& := \bbm x(1) & x(2) & \cdots & x(T) \ebm.
\end{align*}	
We then say that the data are {\em compatible} with the true system $(A_{\rm true},B,C,D,E,F)$. 

The set of all $n \times n$ matrices $A$ such that the data \eqref{eq:UXdata} are compatible with the system $(A,B,C,D,E,F)$ is denoted by $\calA_{\rm dat}$, i.e.,
\begin{align} 
\calA_{\rm dat} & := \big\{ A \in \mathbb{R}^{n \times n}  \mid  
\exists W_- :  \nonumber \\ &\begin{pmatrix} X_{+}\\ Y_- \end{pmatrix}  = \begin{pmatrix} A & B \\ C  &  D \end{pmatrix} \begin{pmatrix} X_{-} \\ U_- \end{pmatrix} 
+ \begin{pmatrix} E  \\ F \end{pmatrix} W_-  \big\}. \label{eq:compat}
\end{align}
Let $\calP$ denote some system theoretic property that might or might not hold for a given linear system. The general problem that we will address in this paper is to determine from the data harvested from \eqref{eq:full system} whether the property $\cal P$ holds for the unknown true system $(A_{\rm true},B,C,D,E,F)$. Since on the basis of the data we can not distinguish between the true $A_{\rm true}$ and any $A \in  \calA_{\rm dat}$, we need to check whether the property holds for {\em all} systems $(A,B,C,D,E,F)$ with $A \in  \calA_{\rm dat}$. Following \cite{vanWaarde2020}, in that case we call the data {\em informative for property $\calP$.}
\begin{example}
For $\cal P$ take the property `$(A,B)$ is a controllable pair'. Suppose that on the basis of the data $(U_-,X, Y_-)$ we want to determine whether $\calP$ holds for the pair $(A_{\rm true},B)$ corresponding to the true system. This requires to check whether the data are informative for property $\calP$. Using Theorem 8 in \cite{vanWaarde2020}, it can be shown that in the noiseless case (i.e. the case that $E =0$ and $F =0$) the data $(U_-,X, Y_-)$ are informative for $\calP$ if and only if $\rank \bbm X_+ - \lambda X_- & B \ebm = n$ for all $\lambda \in \mathbb{C}$. 
\end{example}
\begin{example}
For $\calP$ take the property `the pair $(C,A)$ is detectable'. In the noiseless case it can be shown that the data $(U_-,X, Y_-)$ are informative for $\calP$ if and only if $\ker C \subseteq \im X_-$ and for all $\lambda \in\mathbb{C}$ with $|\lambda| \geq 1$ we have
$$
	\rank \begin{pmatrix} X_+ - BU_- - \lambda X_-  \\ CX_- \end{pmatrix} =  \rank X_- .
$$  
This will be one of the results in this paper.
\end{example} 
\begin{remark} \label{rem:independent}
We note that the case of independent process noise and measurement noise is also covered by the noisy model \eqref{eq:full system} introduced above. The noise matrices should then be taken of the form $E = (E_1~ 0)$ and $F = (0~ F_2)$, while the noise signal is given by the vector  $\bmw = \begin{pmatrix} \bmw_1 \\ \bmw_2 \end{pmatrix}$ and likewise $W_- = \begin{pmatrix} W_{1 -} \\ W_{2 -} \end{pmatrix}$. A special case of this is that only process noise occurs, in which case $F_2 $ is void and $E = E_1$ and $F = 0$.
In other words, in the case of independent process and measurement noise we have $A \in \calA_{\rm dat}$ if and only if there exists a matrix $W_{1-}$ such that $X_+ = A X_- + B U_- + E_1 W_{1-}$. The equation $Y_- = CX_- + D U_- + F_2 W_{2-}$ can then be ignored since it does not put any constraint on $A$. 
\end{remark}
The purpose of this paper is to establish necessary and sufficient conditions on the input-state-output data obtained from \eqref{eq:full system} to be informative for a range of system properties $\calP$. Throughout, we will restrict ourselves to the situation introduced above, namely, that the state map $A_{\rm true}$ is unknown, but that the matrices $B,C$ and $D$ are known. We will study both the noisy case as well as the noiseless case. In the noisy case it will be assumed that the noise matrices $E$ and $F$ are known. 

The outline of the remainder of this paper is as follows. In Section \ref{sec:rank property}, we will state and prove a theorem that will be instrumental in order to obtain our results on informativity in the rest of the paper. The theorem expresses a rank property of the Rosenbrock system matrix of the unknown system in terms of a polynomial matrix that collects available information about the unknown system. In Section \ref{sec:informativity analysis}, this result will be applied to obtain necessary and sufficient conditions for informativity of noisy data for the following system properties:
\begin{itemize}
\item strong observability and strong detectability of\\ $(A,B,C,D)$,
\item observability and detectability of $(C,A)$,
\item strong controllability and strong stabilizability of \\$(A,B,C,D)$,
\item controllability and stabilizability of $(A,B)$.
\end{itemize}
In Section \ref{sec:geometric}, we apply ideas from the geometric approach to linear systems, see \cite{Trentelman2001, Wonham1985} to set up a geometric framework for informativity analysis for strong observability and obser\-vability. This framework will then be applied to the analysis of informativity for left-invertibility. Finally, in Section \ref{sec:conclusions} we close this paper with concluding remarks.

\section{A rank property for an affine set of systems} \label{sec:rank property}
In this section we will establish a general framework that will enable us characterize informativity of input-state-output data for the properties listed in Section II.
 
Let $P \in \mathbb{R}^{n \times r}$, $Q\in \mathbb{R}^{\ell \times n}$ and $R\in \mathbb{R}^{\ell \times r}$ be given matrices. Here, $r$ and $\ell$ are positive integers, and the symbol $n$ has the usual meaning of state space dimension. Using these matrices, we define an affine space of state matrices $A$ by
\begin{equation}\label{eq:def sig a} 
\calA := \{ A \in \mathbb{R}^{n\times n} \mid R = Q A P \}.
 \end{equation}
It is easily seen that $\calA$ is nonempty if and only if $\im R \subseteq \im Q$ and $\ker P \subseteq \ker R$. Assume this to be the case.

Now let $B \in \mathbb{R}^{n \times m}$, $C\in \mathbb{R}^{p \times n}$ and $D \in \mathbb{R}^{p \times m}$ be given, and for each  $A\in\calA$ consider the system
\begin{subequations}\label{eq:def sys ABCD}
	\begin{align} \bmx(t+1) &= A\bmx(t)+B\bmu(t), \\ \bmy(t) &= C\bmx(t) + D\bmu(t). \end{align}
\end{subequations}
The Rosenbrock system matrix associated with the system \eqref{eq:def sys ABCD} is defined as the polynomial matrix
\begin{equation}  \label{eq:Rosenbrock}
\begin{pmatrix} A- sI & B \\ C & D \end{pmatrix}.
 \end{equation} 
In addition, we we will consider the polynomial matrix 
\begin{equation} \label{eq:XYZ}
\begin{pmatrix} R- sQP & QB\\ CP &D \end{pmatrix}
\end{equation}
associated with the given matrices $(P,Q,R)$ and $(B,C,D)$.
The following theorem expresses a uniform rank property of the set of system matrices \eqref{eq:Rosenbrock}, with $A$ ranging over the affine set $\calA$, in terms of a rank property of the single polynomial matrix \eqref{eq:XYZ}.
\begin{theorem}\label{thm:rank equivalences}
	Let $(P,Q,R)$ and $(B,C,D)$ be given. Then 
	\begin{equation}\label{eq:rank equivalences 1} \rank  \begin{pmatrix} A- \lambda I & B \\ C & D \end{pmatrix} =n+ \rank\begin{pmatrix} B \\D \end{pmatrix}
	\end{equation}
	for all $A\in\calA$ and $\lambda \in\mathbb{C}$ if and only if $C^{-1}\im D\subseteq \im P$ and 	\begin{equation}\label{eq:rank equivalences 2}
	\rank \begin{pmatrix} R- \lambda QP & QB\\ CP &D \end{pmatrix} = \rank P + \rank \begin{pmatrix} QB \\D \end{pmatrix}
	\end{equation} 
	for all $\lambda \in\mathbb{C}$ .\footnote{For a given subspace $\calL$ and matrix $M$ we denote by $M ^{-1} \calL$ the inverse image $\{x \mid Mx \in \calL\}$.}
	
	In addition, \eqref{eq:rank equivalences 1} holds for all $A\in\calA$ and $\lambda\in\mathbb{C}$ such that $|\lambda|\geq 1$ if and only if  $C^{-1}\im D\subseteq \im P$ and \eqref{eq:rank equivalences 2} holds for all $\lambda\in\mathbb{C}$ such that $|\lambda|\geq 1$.
\end{theorem}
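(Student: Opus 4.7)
The plan is to work pointwise in $\lambda$, converting the uniform rank equality \eqref{eq:rank equivalences 1} over $A\in\calA$ into a kernel statement depending only on $(P,Q,R)$ and $(B,C,D)$, and then recognising that statement as \eqref{eq:rank equivalences 2}. Because every step is carried out $\lambda$-by-$\lambda$, the ``for all $\lambda\in\mathbb{C}$'' version and the ``for all $|\lambda|\geq 1$'' version will follow from exactly the same reasoning.

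First I would translate \eqref{eq:rank equivalences 1} into kernel form: since $\bbm 0 \\ u \ebm \in \ker\bbm A-\lambda I & B \\ C & D \ebm$ whenever $u\in\ker\bbm B \\ D \ebm$, rank-nullity shows that \eqref{eq:rank equivalences 1} holds at $\lambda$ if and only if every kernel vector $\bbm x \\ u \ebm$ of the Rosenbrock matrix satisfies $x=0$. Next I would deduce the necessity of $C^{-1}\im D\subseteq \im P$: if some nonzero $x\in C^{-1}\im D$ lies outside $\im P$, pick $u$ with $Cx+Du=0$, fix any $A_0\in\calA$, and choose a linear functional $\alpha$ with $\alpha(x)=1$ and $\alpha|_{\im P}=0$. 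The rank-one perturbation $\Delta y:=\alpha(y)(\lambda x-Bu-A_0 x)$ satisfies $Q\Delta P=0$, so $A_0+\Delta\in\calA$, and by construction provides a Rosenbrock kernel vector with $x\neq 0$, contradicting \eqref{eq:rank equivalences 1}. Since $\lambda$ here can be taken arbitrary, the same construction handles the $|\lambda|\geq 1$ version.

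Assume now $C^{-1}\im D\subseteq \im P$. For each fixed $\lambda$ I would establish a bijection between Rosenbrock-matrix kernel vectors $\bbm x \\ u \ebm$ (for some $A\in\calA$) with $x\neq 0$, and kernel vectors $\bbm \xi \\ u \ebm$ of $N(\lambda):=\bbm R-\lambda QP & QB \\ CP & D \ebm$ with $P\xi\neq 0$. In one direction, $Cx+Du=0$ forces $x\in C^{-1}\im D\subseteq \im P$, so $x=P\xi$; multiplying $(A-\lambda I)x+Bu=0$ by $Q$ and using $QAP=R$ yields $(R-\lambda QP)\xi+QBu=0$, and the output equation becomes $CP\xi+Du=0$. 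Conversely, given $\bbm \xi \\ u \ebm\in\ker N(\lambda)$ with $P\xi\neq 0$, set $w:=\lambda P\xi-Bu-A_0 P\xi$; the kernel relation gives $Qw=0$, so the rank-one update $\Delta y:=\alpha(y)w$ with $\alpha(P\xi)=1$ satisfies $Q\Delta P=0$, whence $A_0+\Delta\in\calA$ realises $\bbm P\xi \\ u \ebm$ as a Rosenbrock kernel vector.

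It remains to convert this kernel statement into \eqref{eq:rank equivalences 2}. The standing assumption $\ker P\subseteq \ker R$ (which follows from $\calA\neq\emptyset$) implies that $\ker N(\lambda)$ always contains both $\{\bbm \xi \\ 0 \ebm : \xi\in\ker P\}$ and $\{\bbm 0 \\ u \ebm : u\in\ker\bbm QB \\ D \ebm\}$, and these subspaces meet only at the origin, giving $\rank N(\lambda)\leq \rank P+\rank\bbm QB \\ D \ebm$. Equality in \eqref{eq:rank equivalences 2} is therefore equivalent to $\ker N(\lambda)$ coinciding with the sum of these two subspaces, i.e.\ to every kernel vector $\bbm \xi \\ u \ebm$ of $N(\lambda)$ satisfying $P\xi=0$ — which by the bijection of the previous paragraph is exactly the kernel form of \eqref{eq:rank equivalences 1}. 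The main technical obstacle is the converse direction of the bijection: one must verify $Qw=0$ so that the rank-one update $\Delta$ stays inside $\{A:QAP=R\}$, and this is precisely what the kernel relation for $N(\lambda)$ supplies.
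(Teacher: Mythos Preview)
Your reformulation of both rank equalities as kernel conditions, the forward direction of the ``bijection'', and the final rank-nullity bookkeeping are all fine and match the paper's approach. The gap is in the converse direction of your bijection when $\lambda\notin\mathbb{R}$. The set $\calA$ is, by definition, a set of \emph{real} $n\times n$ matrices, so the perturbed matrix $A_0+\Delta$ must be real. But for complex $\lambda$ the kernel vector $\bbm \xi\\ u\ebm$ of $N(\lambda)$ is in general complex, so $P\xi$ and $w=\lambda P\xi-Bu-A_0P\xi$ are complex, and your rank-one update $\Delta y=\alpha(y)w$ is then a complex matrix. Even if you try to make $\alpha$ real, you cannot in general arrange $\alpha(P\xi)=1$ for complex $P\xi$, and $w$ remains complex regardless. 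Thus $A_0+\Delta\notin\calA$ and the construction does not produce a counterexample.

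The paper closes this gap by a case distinction: for $\lambda\in\mathbb{R}$ the argument is essentially yours; for $\lambda\notin\mathbb{R}$ with $\Real(P\xi)$ and $\Imag(P\xi)$ linearly independent one builds a real $A_0$ prescribing its action on both of these vectors separately (sending them into $\ker Q$), which is possible precisely because they are independent; and for $\lambda\notin\mathbb{R}$ with $\Real(P\xi)$ and $\Imag(P\xi)$ dependent one rescales to make $P\xi$ real and reduces to a real eigenvalue $\mu$ by exploiting the two real equations obtained from splitting the complex kernel relation. Your argument needs an analogous mechanism to stay within real matrices.
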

\begin{proof} To start the proof, first observe that for any $A\in \calA$: 
\begin{equation} \label{eq:useful}
 \begin{pmatrix} A- \lambda I & B \\ C & D \end{pmatrix} = \begin{pmatrix} A-\lambda I & I & 0\\ C& 0 & I \end{pmatrix} \begin{pmatrix} I & 0 \\ 0 & B \\ 0 & D \end{pmatrix}.
\end{equation}
Note that for any pair of matrices $M$ and $N$ we have $\rank MN=\rank N$ if and only if $\ker MN = \ker N$. By applying this to \eqref{eq:useful}, we see that \eqref{eq:rank equivalences 1} is equivalent to
\[ 
 \begin{pmatrix} A- \lambda I & B \\ C & D \end{pmatrix} \begin{pmatrix}\xi \\ \eta \end{pmatrix} = 0 \implies  \begin{pmatrix} I & 0 \\ 0 & B \\ 0 & D \end{pmatrix}\begin{pmatrix}\xi \\ \eta \end{pmatrix} = 0 .
\]
It is straightforward to check that, in turn, this holds if and only if
\begin{equation} \label{eq: equiv to rank 1} 
 \begin{pmatrix} A- \lambda I & B \\ C & D \end{pmatrix}\begin{pmatrix}\xi \\ \eta \end{pmatrix} = 0 \implies  \xi =0 .
\end{equation}
Similarly, note that for all $A \in \calA$
\[ 
 \begin{pmatrix} R- \lambda QP & QB\\ CP &D \end{pmatrix}\! = \!\begin{pmatrix} Q(A-\lambda I) \!\! & I \! & 0\\ C\!\! & 0\! & I \end{pmatrix} \begin{pmatrix} P\!\! & 0 \\ 0\!\! & QB \\ 0\!\! & D \end{pmatrix}\!\!.
\]
This makes \eqref{eq:rank equivalences 2} equivalent to 
\begin{equation}\label{eq: equiv to rank 2}
 \begin{pmatrix} R- \lambda QP & QB\\ CP &D \end{pmatrix} \begin{pmatrix} \nu \\ \eta \end{pmatrix}=0 \! \implies P\nu = 0  
\end{equation}
From here on, we will prove the first statement of the theorem, noting any changes required for the second part. 

$(\Leftarrow)$: Let $A\in\calA$ and $\lambda\in\mathbb{C}$ (resp. $\lambda\in\mathbb{C}$ such that $|\lambda|\geq 1$). Assume that $C^{-1}\im D \subseteq \im P$ and \eqref{eq:rank equivalences 2} holds for $\lambda$. We will prove that \eqref{eq: equiv to rank 1} holds. For this, let $\xi$ and $\eta$ satisfy 
\[ 
 \begin{pmatrix} A- \lambda I & B \\ C & D \end{pmatrix}\begin{pmatrix}\xi \\ \eta \end{pmatrix} = 0. 
\]
Since $\xi\in C^{-1}\im D \subseteq \im P$, we can write $\xi= P \nu$ for some $\nu$. Now, by pre-multiplying with $\begin{pmatrix} Q & 0 \\ 0& I \end{pmatrix}$ we obtain that
\[
 \begin{pmatrix} R- \lambda QP & QB\\ CP &D \end{pmatrix} \begin{pmatrix}\nu \\ \eta \end{pmatrix}=0.
\]
We can now apply \eqref{eq: equiv to rank 2} and thereby conclude that $\xi=P\nu =0$. This proves that \eqref{eq: equiv to rank 1} holds. 

$(\Rightarrow)$: Assume that \eqref{eq: equiv to rank 1} holds for all $A\in\calA$ and $\lambda\in\mathbb{C}$ (resp. $\lambda\in\mathbb{C}$ such that $|\lambda|\geq 1$). We will first prove that $C^{-1}\im D \subseteq \im P$. 

Let $\hat{x}\in C^{-1}\im D\setminus \im P$, that is, $\hat{x}\not\in \im P$ and there exists a $\hat{u}$ such that $C\hat{x} + D\hat{u} = 0$. Without loss of generality take $\hat{x}$ and $\hat{u}$ as real vectors. Take any $A\in \calA$ and $\mu\in\mathbb{R}$ (resp. $\mu\in\mathbb{R}$ such that $|\mu|\geq 1$). Let $A_0$ be any real $n \times n$ matrix such that $Q A_0 P = 0 $ and $A_0\hat{x}= -(A-\mu I)\hat{x}-B\hat{u}$. Note that such matrix exists as $\hat{x}\not\in \im P$ and $-(A-\mu I)\hat{x}-B\hat{u}$ is a real vector. Now define $\bar{A}:=A+A_0$. Note that $\bar{A}\in\calA$ and 
\[ \begin{pmatrix} \bar{A}-\mu I & B \\ C & D \end{pmatrix}\begin{pmatrix}\hat{x} \\ \hat{u} \end{pmatrix} = 0. \]
By \eqref{eq: equiv to rank 1}, we see that $\hat{x}=0$, which contradicts with $x\not\in\im P$. Therefore $C^{-1}\im D \subseteq \im P$.

We now move to proving \eqref{eq: equiv to rank 2}. Let $\lambda\in\mathbb{C}$ (resp. $\lambda\in\mathbb{C}$ such that $|\lambda|\geq 1$), and let $\nu$ and $\eta$ satisfy 
\[\begin{pmatrix} R-\lambda QP \!\!& QB \\ CP\!\! & D \end{pmatrix} \begin{pmatrix}\nu \\ \eta \end{pmatrix}=0.\]
Denote $\xi = P\nu$, then we see that $C\xi +D\eta=0$ and $(A-\lambda I)\xi+B\eta\in\ker Q$ for any $A\in \calA$.

We will prove \eqref{eq: equiv to rank 2} holds in three separate cases: First, we prove the statement for real $\lambda$. For complex $\lambda$ we consider the cases where the real and complex parts of $\xi$ are linearly dependent and where these are linearly independent.

First suppose that $\lambda \in \mathbb{R}$. Then, without loss of generality, $\nu$ and $\eta$ are real, and as such $\xi$ is real. Suppose that $\xi\neq 0$, and take any $A\in \calA$. Let $A_0$ be any real $n \times n$ matrix such that $A_0 \xi=-(A-\lambda I)\xi-B\eta$ and $QA_0P=0$. Such a matrix exists as $-(A-\lambda I)\xi-B\eta \in \ker Q$ and is a real vector and $\xi\neq 0$. Now take $\bar{A}=A+A_0$. Then it is immediate that $\bar{A}\in \calA$ and:
\[ \begin{pmatrix} \bar{A}-\lambda I & B \\ C & D \end{pmatrix}\begin{pmatrix}\xi \\ \eta \end{pmatrix} = 0. \]
As \eqref{eq: equiv to rank 1} holds for $\bar{A}$ by assumption, we see that $\xi=0$, which leads to a contradiction. Therefore $\xi=0$.

Now consider that case where $\lambda\not\in\mathbb{R}$. Suppose that the real and complex parts of $\xi$ are linearly dependent. Therefore, there exist real scalars $\alpha,\beta\in \mathbb{R}$ and a real vector $r$ such that $\xi = (\alpha +i\beta)r$. Let $\hat{r} = (\alpha-i\beta) \xi = (\alpha^2+\beta^2)r$. Let $A\in\calA$, then: 
\[\begin{pmatrix} Q(A-\lambda I) \!\!& QB \\ C\!\! & D \end{pmatrix} \begin{pmatrix}\hat{r} \\ (\alpha-i\beta)\eta \end{pmatrix}=0.\]
Denote $\lambda = a+bi$, where $b\neq 0$, and $(\alpha-i\beta)\eta= \eta_1+i\eta_2$. Then we see that: 
$
Q(A-aI)\hat{r} +QB\eta_1  = -b\hat{r} + QB\eta_2  =0$ and $C\hat{r} +D\eta_1 = D\eta_2 =0$. 
Let $\mu\in\mathbb{R}$ (resp. $\mu\in\mathbb{R}$ such that $|\mu|\geq 1$). Note that 
\[\begin{pmatrix} Q(A-\mu I) \!\!& QB \\ C\!\! & D \end{pmatrix} \begin{pmatrix}b\hat{r} \\ b\eta_1 +(\mu-a)\eta_2 \end{pmatrix}=0.\]
As $\mu$ is real, we can now apply the previous part of the proof to note that $b\hat{r}=0$, which holds only if $\xi=0$.

Now suppose that $\xi = Pp+iPq$, where $Pp$ and $Pq$ are linearly independent. If we take any $A\in\calA$, we know that $Q(A-\lambda I)\xi+QB\eta=0$, and that we can denote $(A-\lambda I)\xi+B\eta= \zeta_1+\zeta_2i$, where $\zeta_1,\zeta_2\in\ker Q$. Take $A_0$ any real map such that $A_0 Pp= -\zeta_1$, $A_0Xq=-\zeta_2$ and $QA_0P =0$. Such a map exists as $Pp$ and $Pq$ are linearly independent. Now take $\bar{A}=A+A_0$, then $\bar{A}\in \calA$ and clearly 
\[ \begin{pmatrix} \bar{A}-\lambda I & B \\ C & D \end{pmatrix}\begin{pmatrix}\xi \\ \eta \end{pmatrix} = 0. \]
Using \eqref{eq: equiv to rank 1}, this implies that $\xi=0$. This is a contradiction with the fact that $Pp$ and $Pq$ are linearly independent. 
\end{proof}

\section{Data driven informativity analysis} \label{sec:informativity analysis}
In this section we will apply Theorem \ref{thm:rank equivalences} to obtain necessary and sufficient conditions for informativity of input-state-output data for the system properties listed in Section II. For a given system \eqref{eq:def sys ABCD}
we will denote by $\bmx(t, x_0,\bmu)$ and $\bmy(t, x_0,\bmu)$ the state and output sequence corresponding to the initial state $\bmx(0)=x_0$ and input sequence $\bmu$. 

\subsection{Informativity for strong observability and detectability}
We first briefly review the properties of strong observability and strong detectability (see also \cite{Trentelman2001}).
\begin{definition}
The system \eqref{eq:def sys ABCD} is called \textit{strongly observable} if for each $x_0\in\mathbb{R}^n$ and  input sequence $\bmu$ the following holds: $\bmy(t, x_0, \bmu)=0$ for all $t \in \mathbb{Z}_+$ implies that $x_0=0$. The system is called \textit{strongly detectable} if for all $x_0\in\mathbb{R}^n$ and every input sequence $\bmu$ the following holds: $\bmy(t, x_0,\bmu)=0$ for all $t \in \mathbb{Z}_+$ implies that $\lim_{t\rightarrow \infty} \bmx(t, x_0,\bmu) = 0$.
\end{definition}
%
For continuous-time systems, 
necessary and sufficient conditions for strong observability and strong detectability were formulated in \cite{Trentelman2001}.  It can be verified that also the discrete-time system \eqref{eq:def sys ABCD} is strongly observable (strongly detectable) if and only if the pair $(C + DK, A+BK)$ is observable (detectable) for all $K$. It is also straightforward to verify the following.
\begin{prop}  \label{prop:str obs iff rank}
	The system \eqref{eq:def sys ABCD} is strongly observable if and only if for all $\lambda\in\mathbb{C}$
	\begin{equation}\label{eq:haut for str obs}\rank \begin{pmatrix} A-\lambda I & B \\ C & D \end{pmatrix} =n+ \rank\begin{pmatrix} B \\D \end{pmatrix}. \end{equation}
	The system  \eqref{eq:def sys ABCD} is strongly detectable if and only if \eqref{eq:haut for str obs} holds for all $\lambda\in\mathbb{C}$ such that $|\lambda|\geq 1$.
\end{prop}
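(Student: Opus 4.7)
The plan is to leverage the state feedback characterization recalled just before the proposition: system \eqref{eq:def sys ABCD} is strongly observable (strongly detectable) if and only if the pair $(C+DK, A+BK)$ is observable (detectable) for every real $K \in \mathbb{R}^{m \times n}$. Given this, it suffices to show that the uniform Rosenbrock rank condition \eqref{eq:haut for str obs} is equivalent to observability of $(C+DK, A+BK)$ for all $K$, and likewise for detectability after restricting $\lambda$ to $|\lambda| \geq 1$.

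First I would rewrite \eqref{eq:haut for str obs} as a kernel condition. Using the factorization
\[
\begin{pmatrix} A-\lambda I & B \\ C & D \end{pmatrix} = \begin{pmatrix} A-\lambda I & I & 0 \\ C & 0 & I \end{pmatrix}\begin{pmatrix} I & 0 \\ 0 & B \\ 0 & D \end{pmatrix},
\]
exactly as at the start of the proof of Theorem \ref{thm:rank equivalences}, condition \eqref{eq:haut for str obs} is equivalent to the implication that $(A-\lambda I)\xi + B\eta = 0$ together with $C\xi + D\eta = 0$ force $\xi = 0$. By the Hautus test, observability of $(C+DK, A+BK)$ at $\lambda$ is in turn equivalent to the implication that $(A+BK - \lambda I)\xi = 0$ and $(C+DK)\xi = 0$ jointly force $\xi = 0$.

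The easy direction goes as follows. Assume \eqref{eq:haut for str obs} holds at $\lambda$, and suppose $(A+BK)\xi = \lambda \xi$ and $(C+DK)\xi = 0$ for some real $K$. Setting $\eta := K\xi$ produces a kernel vector $(\xi,\eta)$ of the Rosenbrock pencil at $\lambda$, forcing $\xi = 0$. For the converse, assume \eqref{eq:haut for str obs} fails at some $\lambda \in \mathbb{C}$, with a witness $\xi \neq 0$ and $\eta$. I would then construct a real $K$ realizing the required action on $\xi$, so that $\xi$ becomes an unobservable $\lambda$-eigenvector of $(C+DK, A+BK)$, violating uniform observability.

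The main obstacle is precisely this last construction, since for complex $\lambda$ the vectors $\xi,\eta$ are a priori complex while $K$ must be real. I would handle it via the same case split used at the end of the proof of Theorem \ref{thm:rank equivalences}: writing $\xi = \xi_1 + i\xi_2$ and $\eta = \eta_1 + i\eta_2$, one picks a real $K$ with $K\xi_j = \eta_j$ for $j=1,2$ when $\xi_1,\xi_2$ are linearly independent over $\mathbb{R}$, and rescales $\xi$ to a real witness otherwise. A direct computation then yields $(A+BK)\xi = \lambda\xi$ and $(C+DK)\xi = 0$. The strongly detectable statement follows by restricting every quantifier over $\lambda$ to $\{\lambda \in \mathbb{C} : |\lambda| \geq 1\}$ throughout, mirroring the second half of Theorem \ref{thm:rank equivalences}.
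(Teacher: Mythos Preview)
The paper does not actually supply a proof of this proposition: it merely states that the result is ``straightforward to verify'' after recalling the state-feedback characterization. Your approach---reducing \eqref{eq:haut for str obs} to a kernel condition and matching it against the Hautus test for $(C+DK,A+BK)$---is exactly the intended route and is correct in outline.

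One point deserves more care. In the converse direction, when $\lambda\notin\mathbb{R}$ and the real and imaginary parts $\xi_1,\xi_2$ of $\xi$ are linearly dependent, the phrase ``rescales $\xi$ to a real witness'' is not quite sufficient as stated: after rescaling so that the state component is a real vector $r\neq 0$, the parameter $\lambda$ is still complex, so $r$ is not yet an eigenvector of $A+BK$ for any real $K$. What the argument in Theorem~\ref{thm:rank equivalences} actually does in this case is observe that, writing $\lambda=a+bi$ with $b\neq 0$, the real and imaginary parts of the rescaled kernel relation give $(A-aI)r+B\eta_1'=0$, $Cr+D\eta_1'=0$ together with $br=B\eta_2'$, $D\eta_2'=0$; combining these shows the kernel condition in fact fails at \emph{every} real $\mu$, with real input $\eta_1'+\tfrac{\mu-a}{b}\eta_2'$. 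You can then choose any real $\mu$ (taking $|\mu|\geq 1$ in the detectability case) and a real $K$ sending $r$ to that input, which makes $r$ an unobservable $\mu$-eigenvector of $(C+DK,A+BK)$. Your reference to the case split in Theorem~\ref{thm:rank equivalences} points to exactly this, but the summary ``rescales $\xi$ to a real witness'' hides the necessary change of eigenvalue.
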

As in Section III, we now consider the situation that only the matrices $B,C$ and $D$ are given, and that the matrix $A$ can be any matrix from the affine set \eqref{eq:def sig a}
with $P,Q$ and $R$ given matrices. By applying Theorem \ref{thm:rank equivalences} we then get the following necessary and sufficient conditions for strong observability and strong detectability of {\em all} systems \eqref{eq:def sys ABCD} with $A$ ranging over the affine set $\calA$.

\begin{theorem}[Uniform rank condition]\label{thm:str obs hautus}
	Let $(P,Q,R)$ and $(B,C,D)$ be given matrices.  Then \eqref{eq:def sys ABCD} is strongly observable for all $A\in\calA$ if and only if $C^{-1}\im D\subseteq \im X$ and for all $\lambda \in\mathbb{C}$ we have
	\begin{equation}\label{eq:hautus str obs xyz}
	\rank\begin{pmatrix} R-\lambda QP & QB\\ CP &D \end{pmatrix} = \rank P + \rank \begin{pmatrix} QB \\D \end{pmatrix}.
	\end{equation}  
	Similarly,  \eqref{eq:def sys ABCD} is strongly detectable for all $A\in\calA$ if and only if $C^{-1}\im D\subseteq \im P$ and \eqref{eq:hautus str obs xyz} holds for all $\lambda\in\mathbb{C}$ such that $|\lambda|\geq 1$.
\end{theorem}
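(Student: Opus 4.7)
The plan is to observe that Theorem \ref{thm:str obs hautus} is essentially a direct corollary of Theorem \ref{thm:rank equivalences}: the difficult content has already been extracted in the general rank-equivalence result, and what remains is to translate the uniform strong observability/detectability statement into the rank hypothesis of Theorem \ref{thm:rank equivalences} via Proposition \ref{prop:str obs iff rank}.

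More concretely, first I would invoke Proposition \ref{prop:str obs iff rank} pointwise in $A$. For a single matrix $A$, strong observability of the system \eqref{eq:def sys ABCD} is equivalent to the Rosenbrock rank identity \eqref{eq:haut for str obs} holding for every $\lambda\in\mathbb{C}$. Therefore, quantifying over $A\in\calA$, the system \eqref{eq:def sys ABCD} is strongly observable for every $A\in\calA$ if and only if \eqref{eq:haut for str obs} holds for all $A\in\calA$ and all $\lambda\in\mathbb{C}$. This is precisely the left-hand side of the equivalence in Theorem \ref{thm:rank equivalences}.

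Next, I would apply Theorem \ref{thm:rank equivalences} directly. Its first statement gives that this uniform rank condition on the Rosenbrock matrices is equivalent to the two conditions $C^{-1}\im D\subseteq\im P$ and \eqref{eq:rank equivalences 2} holding for all $\lambda\in\mathbb{C}$, and \eqref{eq:rank equivalences 2} is literally \eqref{eq:hautus str obs xyz}. This establishes the first assertion of the theorem (correcting the evident typo $\im X$ to $\im P$ in the statement). For the strong detectability statement, the argument is identical: Proposition \ref{prop:str obs iff rank} equates strong detectability with \eqref{eq:haut for str obs} restricted to $|\lambda|\geq 1$, and then the second clause of Theorem \ref{thm:rank equivalences} (whose $|\lambda|\geq 1$ version was proved in parallel with the full-spectrum version) delivers the equivalent data-side rank condition \eqref{eq:hautus str obs xyz} for all $\lambda\in\mathbb{C}$ with $|\lambda|\geq 1$, again together with the same inclusion $C^{-1}\im D\subseteq\im P$.

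There is no real obstacle at this stage; the only thing to be careful about is the quantifier swap at the start, namely that ``\eqref{eq:haut for str obs} holds for all $\lambda$, for every $A\in\calA$'' is the same as ``\eqref{eq:haut for str obs} holds for all $A\in\calA$ and all $\lambda$,'' so that Theorem \ref{thm:rank equivalences} applies verbatim. All substantive work (the constructions of perturbations $A_0\in\calA$ witnessing necessity of the inclusion and of the rank drop, and the reduction of the complex-$\lambda$ case to the real case) is already encapsulated in Theorem \ref{thm:rank equivalences}, and nothing beyond citing that theorem and Proposition \ref{prop:str obs iff rank} is needed here.
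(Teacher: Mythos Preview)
Your proposal is correct and matches the paper's own proof essentially verbatim: the paper simply states that the result follows immediately by combining Proposition~\ref{prop:str obs iff rank} and Theorem~\ref{thm:rank equivalences}, which is exactly what you do. Your observation about the typo $\im X \to \im P$ and the remark on the quantifier swap are fine and add nothing beyond what the paper implicitly assumes.
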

\begin{proof} This follows immediately by combining Proposition~\ref{prop:str obs iff rank} and Theorem~\ref{thm:rank equivalences}. 
\end{proof}

We will now apply the previous result to informativity of input-state-output data. Suppose the data are $(U_-,X, Y_-)$. Recall Definition \eqref{eq:compat} of the affine set $\calA_{\rm dat}$ of all $n \times n$ matrices $A$ such that the data are compatible with the system $(A,B,C,D,E,F)$.
We want to obtain conditions under which the data are informative for strong observability and for strong detectability.  To this end, let $(M~~N)$  be any matrix such that 
\begin{equation} \label{eq:annihilator}
\ker (M~~N) = \im \begin{pmatrix} E \\ F \end{pmatrix}.
\end{equation}
Then we have $A \in \calA_{\rm dat}$ if and only if $R= MAX_-$ with 
\begin{equation} \label{eq:Z}
R:= (M~ ~N) \begin{pmatrix} X_+ - BU_- \\ Y_- -CX_- -DU_- \end{pmatrix}.
\end{equation}
The following then immediately follows from Theorem \ref{thm:str obs hautus}.
\begin{theorem} \label{th:strong obs and det}
The data $(U_-,X,Y_-)$ are informative for strong observability if and only if 
$C^{-1}\im D\subseteq \im X_-$ and for all $\lambda \in\mathbb{C}$ we have
\begin{equation}\label{eq:hautus str obs xyz}
	\rank \begin{pmatrix} R - \lambda MX_-& MB \\ CX_- &D \end{pmatrix} =  \rank X_- + \rank \begin{pmatrix} MB \\D \end{pmatrix},
\end{equation} 
where $R$ is given by \eqref{eq:Z}. 
 
The data are informative for strong detectability if and only if $C^{-1}\im D\subseteq \im X_-$ and \eqref{eq:hautus str obs xyz} holds for all  $\lambda \in\mathbb{C}$ with $|\lambda| \geq 1$.
\end{theorem}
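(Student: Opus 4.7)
The plan is to reduce this statement to a direct application of Theorem~\ref{thm:str obs hautus}, by identifying $\calA_{\rm dat}$ as an affine set of the form \eqref{eq:def sig a} for a suitably chosen triple $(P,Q,R)$.

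First I would rewrite the compatibility condition \eqref{eq:compat} so as to eliminate the unknown noise sample $W_-$. Since $(M~N)$ is chosen so that $\ker(M~N) = \im \begin{pmatrix} E \\ F \end{pmatrix}$, a matrix $A$ lies in $\calA_{\rm dat}$ if and only if the residual $\begin{pmatrix} X_+ - AX_- - BU_- \\ Y_- - CX_- - DU_- \end{pmatrix}$ lies in $\ker(M~N)$, and after expanding this is equivalent to $M A X_- = R$ with $R$ as defined in \eqref{eq:Z}. Thus $\calA_{\rm dat}$ is exactly the affine set \eqref{eq:def sig a} under the identifications $P = X_-$, $Q = M$, and right-hand side matrix equal to $R$. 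The set is nonempty because, by assumption, the data have been harvested from the true system, so that $A_{\rm true}\in\calA_{\rm dat}$.

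Next I would invoke Theorem~\ref{thm:str obs hautus} for this choice of $(P,Q,R)$ together with the known $(B,C,D)$. Under these substitutions, the subspace condition $C^{-1}\im D \subseteq \im P$ becomes $C^{-1}\im D \subseteq \im X_-$, and the rank identity from that theorem specializes directly to the rank identity displayed in the statement. Theorem~\ref{thm:str obs hautus} then yields that these two conditions are jointly equivalent to strong observability of $(A,B,C,D)$ for every $A\in\calA_{\rm dat}$, which by definition is informativity of the data for strong observability. The strong detectability case is obtained by restricting $\lambda$ to $|\lambda|\geq 1$ throughout exactly the same argument, invoking the second (``in addition'') clause of Theorem~\ref{thm:rank equivalences} through Theorem~\ref{thm:str obs hautus}.

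The only substantive step is the reformulation $A\in\calA_{\rm dat} \iff MAX_- = R$; everything else is a mechanical specialization of an already-proved theorem, so I do not anticipate a genuine obstacle. The reason this is easy is that the existential quantifier over $W_-$ in \eqref{eq:compat} is removed exactly by applying a left annihilator of $\begin{pmatrix} E \\ F \end{pmatrix}$, and this annihilator is the content of the matrix $(M~N)$ introduced in \eqref{eq:annihilator}.
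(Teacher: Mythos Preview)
Your proposal is correct and follows exactly the paper's own route: the paper establishes just before the theorem that $A\in\calA_{\rm dat}$ if and only if $R=MAX_-$ with $R$ as in \eqref{eq:Z}, and then states that Theorem~\ref{th:strong obs and det} ``immediately follows from Theorem~\ref{thm:str obs hautus}'' with $P=X_-$, $Q=M$. There is nothing to add.
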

%
%
In the case of independent process and measurement noise (see Remark \ref{rem:independent}), in which $E = (E_1~ 0)$ and $F = (0~ F_2)$, we have $A \in \calA_{\rm dat}$ if and only if there exists a matrix $W_{1-}$ such that $X_+ = A X_- + B U_- + E_1 W_{1-}$. Thus, $A \in \calA_{\rm dat}$ if and only if 
$R= MAX_-$ with 
\begin{equation} \label{eq:newZ} 
R := M(X_+ - BU_-),
\end{equation}
and $M$ such that $\ker M = \im E_1 = \im E$. In this case, the formulation of Theorem \ref{th:strong obs and det} holds verbatim with this $M$, and the new $R$ given by \eqref{eq:newZ}.

Finally, for the special case $E =0$ (the case with no process noise), we have $A \in \calA_{\rm dat}$ if and only if $R= AX_-$ with 
\begin{equation} \label{eq:newnewZ} 
R := X_+ - BU_-~, 
\end{equation}
In that case, Theorem \ref{th:strong obs and det} holds verbatim with $M = I_n$ and $R$ given by \eqref{eq:newnewZ}.


\subsection{Informativity for observability and detectability}
Next, we turn to characterizing informativity of the data for the properties of observability and detectability. 
Consider the system
\begin{equation} \label{eq:our observed system}
\bmx(t+1) = A\bmx(t), ~ \bmy(t) = C\bmx(t). 
\end{equation}
The Hautus test states that \eqref{eq:our observed system} is observable (detectable) if and only if 
\[
\rank \begin{pmatrix} A - \lambda I \\
                                  C
          \end{pmatrix} = n
\]
for all $\lambda \in \mathbb{C}$ (for all  $\lambda \in \mathbb{C}$ with $|\lambda| \geq 1$).

Now, take the situation that only $C$ is known, that matrices $P,Q$ and $R$ are given, and that $A$ can be any matrix from the affine set $\calA$ given by \eqref{eq:def sig a}. By applying Theorem \ref{eq:Rosenbrock} to the special case $B =0$ and $D =0$, we then obtain the following.
\begin{cor}[Uniform Hautus test]
Let $(P,Q,R)$ and $C$ be given matrices.  Then \eqref{eq:our observed system} is observable for all $A\in\calA$ if and only if $\ker C \subseteq \im P$ and for any $\lambda \in\mathbb{C}$ we have
	\begin{equation} \label{eq:hautus obs xyz}
	\rank \begin{pmatrix} R-\lambda QP \\ CP \end{pmatrix} = \rank P.
	\end{equation}  
	Similarly,  \eqref{eq:our observed system} is detectable for all $A\in\calA$ if and only if $\ker C \subseteq \im P$ and \eqref{eq:hautus obs xyz} holds for all $\lambda\in\mathbb{C}$ such that $|\lambda|\geq 1$.
\end{cor}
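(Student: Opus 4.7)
The plan is to obtain this corollary as a direct specialization of Theorem~\ref{thm:rank equivalences} to the case $B=0$ and $D=0$, combined with the standard Hautus characterization of observability and detectability. The autonomous system \eqref{eq:our observed system} can be viewed as the system \eqref{eq:def sys ABCD} with $B=0$ and $D=0$, so the Hautus test asserts that \eqref{eq:our observed system} is observable (respectively detectable) if and only if $\rank\begin{pmatrix} A-\lambda I \\ C \end{pmatrix} = n$ for all $\lambda \in \mathbb{C}$ (respectively for all $\lambda$ with $|\lambda| \geq 1$).

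Next, I would observe that with $B=0$ and $D=0$, the Rosenbrock matrix \eqref{eq:Rosenbrock} reduces to $\begin{pmatrix} A-\lambda I & 0 \\ C & 0 \end{pmatrix}$, whose rank equals $\rank\begin{pmatrix} A-\lambda I \\ C \end{pmatrix}$. Since $\rank\begin{pmatrix} B \\ D \end{pmatrix} = 0$, the rank identity \eqref{eq:rank equivalences 1} in Theorem~\ref{thm:rank equivalences} becomes exactly the Hautus condition $\rank\begin{pmatrix} A-\lambda I \\ C \end{pmatrix} = n$, for every $A \in \calA$. Thus observability (detectability) of \eqref{eq:our observed system} for all $A \in \calA$ is equivalent to \eqref{eq:rank equivalences 1} holding for all $A \in \calA$ and all $\lambda \in \mathbb{C}$ (respectively all $\lambda$ with $|\lambda| \geq 1$).

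Similarly, in the polynomial matrix \eqref{eq:XYZ} the right block column vanishes, so its rank coincides with $\rank\begin{pmatrix} R-\lambda QP \\ CP \end{pmatrix}$, and $\rank\begin{pmatrix} QB \\ D \end{pmatrix} = 0$. Hence the rank identity \eqref{eq:rank equivalences 2} simplifies to \eqref{eq:hautus obs xyz}. Finally, since $D=0$, the subspace $C^{-1}\im D$ reduces to $\ker C$, so the inclusion $C^{-1}\im D \subseteq \im P$ becomes $\ker C \subseteq \im P$.

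With all these specializations in place, applying the first statement of Theorem~\ref{thm:rank equivalences} yields the equivalence for observability, and applying the second statement (restricted to $|\lambda|\geq 1$) yields the equivalence for detectability. No separate arguments are needed; the only thing that requires verification is that the simplifications of both sides of the two rank identities are indeed as claimed, which is immediate from the block structure. There is no substantive obstacle here, since all nontrivial work has already been absorbed into Theorem~\ref{thm:rank equivalences}.
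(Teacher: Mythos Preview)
Your proposal is correct and follows exactly the approach the paper takes: the corollary is obtained by specializing Theorem~\ref{thm:rank equivalences} to $B=0$, $D=0$ and combining with the Hautus test, with the reductions $C^{-1}\im D=\ker C$ and $\rank\begin{pmatrix}QB\\ D\end{pmatrix}=0$ that you spell out.
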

We now apply the previous result to the situation that input-state-output data on the system are available, as explained in Section II. As before, suppose the data are $(U_-,X,Y_-)$ and consider the affine set $\calA_{\rm dat}$ of all $n \times n$ matrices given by \eqref{eq:compat}. The next result establishes conditions under which the data are informative for observability and for detectability.
\begin{cor} \label{cor:obs and det}
Let $(U_-,X,Y_-)$ be given input-state-output data. Let $(M~ ~N)$ be any matrix such that \eqref{eq:annihilator} holds. Let $R$ be given by \eqref{eq:Z}. The data are informative for observability if and only if 
$\ker C \subseteq \im X_-$ and for all $\lambda \in\mathbb{C}$ we have
\begin{equation}\label{eq:hautus obs data xyz}
	\rank \begin{pmatrix} R - \lambda M X_-  \\ CX_- \end{pmatrix} =  \rank X_- .
\end{equation}  
The data are informative for detectability if and only if $\ker C \subseteq \im X_-$ and \eqref{eq:hautus obs xyz} holds for all  $\lambda \in\mathbb{C}$ with $|\lambda| \geq 1$.
\end{cor}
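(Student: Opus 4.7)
The plan is to recognize that this Corollary is essentially the Uniform Hautus test (the immediately preceding corollary) specialized to the affine set of state matrices induced by the data. First, I would note that the text leading up to the statement has already shown that $A\in\calA_{\rm dat}$ if and only if $R = MAX_-$, where $R$ is given by \eqref{eq:Z} and $(M~N)$ is any left annihilator of $\begin{pmatrix} E\\ F\end{pmatrix}$. This means
\[
\calA_{\rm dat} = \{A \in \mathbb{R}^{n\times n} \mid R = M A X_-\},
\]
which is exactly the affine set $\calA$ of \eqref{eq:def sig a} under the identification
\[
P := X_-, \qquad Q := M, \qquad R := (M~N)\begin{pmatrix} X_+ - BU_- \\ Y_- - CX_- - DU_-\end{pmatrix}.
\]

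Next, I would observe that by the very definition of informativity, the data $(U_-,X,Y_-)$ are informative for observability of $(C,A_{\rm true})$ if and only if $(C,A)$ is observable for every $A\in\calA_{\rm dat}$, and analogously for detectability. At this point both statements follow by a direct application of the preceding Uniform Hautus test corollary with the substitutions $(P,Q,R) = (X_-, M, R)$: the subspace condition $\ker C \subseteq \im P$ becomes $\ker C \subseteq \im X_-$, and the rank condition
\[
\rank\begin{pmatrix} R - \lambda QP \\ CP\end{pmatrix} = \rank P
\]
becomes
\[
\rank\begin{pmatrix} R - \lambda MX_- \\ CX_- \end{pmatrix} = \rank X_-,
\]
which is precisely \eqref{eq:hautus obs data xyz}. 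The detectability part is identical, except the equivalence is quantified over $\lambda\in\mathbb{C}$ with $|\lambda|\geq 1$, which is exactly the second clause of the Uniform Hautus test.

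Since the underlying work has already been done in Theorem~\ref{thm:rank equivalences} (and its specialization to the Uniform Hautus test), the only real content of this proof is the matching-of-data step: verifying that $\calA_{\rm dat}$ has the required affine form $\{A : R=QAP\}$ with $P=X_-$, $Q=M$, and $R$ given by \eqref{eq:Z}. This is immediate from the construction of the annihilator $(M~N)$ in \eqref{eq:annihilator}, so there is no real obstacle; the proof will amount to one or two sentences plus a reference to the Uniform Hautus test.
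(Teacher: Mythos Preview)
Your proposal is correct and matches the paper's approach exactly: the corollary is stated without proof in the paper precisely because it is the Uniform Hautus test specialized to $P=X_-$, $Q=M$, and $R$ as in \eqref{eq:Z}, using the already-established identification $\calA_{\rm dat}=\{A:\;R=MAX_-\}$.
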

Again, in the special case that the process noise and measurement noise are independent,  Corollary \ref{cor:obs and det} holds verbatim with $M$ such that $\ker M = \im E$ and $R$ given by \eqref{eq:newZ}. For the case that there is no process noise, in the rank test \eqref{eq:hautus obs data xyz} we should take $M = I_n$ and $R$ given by \eqref{eq:newnewZ}.
\begin{example} \label{ex:ex1}
As an example, consider the system \eqref{eq:full system} with 
\[
A_{\rm true} = \begin{pmatrix} 0 & 1 \\ 2 & 0 \end{pmatrix}, ~B = \begin{pmatrix} 0\\ 1 \end{pmatrix}, ~E = \begin{pmatrix} 1\\ 0 \end{pmatrix}, 
\]
\[
C = \begin{pmatrix} 1 & 0 \end{pmatrix}, ~
D = 0,~ F = 0.
\]
Suppose that the following data are given: 
\begin{equation}  \label{eq:specific data}
U_- = (1 ~ ~1),~ X = \begin{pmatrix} 0 & 0 & 2 \\ 0 & 1 & 1 \end{pmatrix},~ Y_- = ( 0~ ~ 0).
\end{equation}
These data are indeed compatible with the true system, since \eqref{e:data} holds with $W_- = (0~~1)$.
It is easily verified that
\[
\calA_{\rm dat} = \big\{  \begin{pmatrix} a  &  b \\ c & 0 \end{pmatrix} \mid a,b,c \in \mathbb{R} \big\}.
\]
We will check whether the data are informative for strong detectability. Take $M = (0~~1)$. Since $F=0$ we have
$R= M(X_+ - BU_-) = (0~~0)$, $MX_- = (0~~1)$, $CX_- = (0~~0)$, $M B = 1$. The condition $C^{-1} \im D \subseteq \im X_-$ is satisfied, so informativity for strong detectability holds if and only if 
\[
\rank \begin{pmatrix} 0  &  -\lambda & 1 \\ 0  & 0  & 0 \end{pmatrix} = 2
\]
for $|\lambda| \geq 1$, which is clearly not the case. We now check informativity for detectability. This requires $\ker C \subseteq \im X_-$ and 
\[ 
\rank \begin{pmatrix} 0  &  -\lambda \\ 0  & 0   \end{pmatrix} = 1
\]
for $|\lambda| \geq 1$. Both conditions indeed hold. On the other hand, the data are not informative for observability since the rank condition fails for $\lambda = 0$. 
If, in the example, we modify $C$ and take $C =(0~~1)$, and accordingly $Y_- = (0~~1)$, then the data are still not informative for strong observability. In that case the rank condition does hold for all $\lambda \in \mathbb {C}$, but the condition $C^{-1} \im D \subseteq \im X_-$  is violated.
\end{example}
\begin{remark}
For the noiseless case, without proof we mention that if, apart from $A_{\rm true}$, also $C_{\rm true}$ is unknown (but $B$ and $D$ are still known), then both for informativity for observability and detectability a necessary condition  is that $X_-$ has full row rank. As illustrated in Example \ref{ex:ex1}, this is no longer the case if $C_{\rm true}$ is known. Since $X_+= A_{\rm true}X_-+  BU_- $ and $Y_- = C_{\rm true}X_- + DU_-$, this implies $A_{\rm true} = (X_+ -BU_-) X_-^\dagger$ and $C_{\rm true}= (Y_- - DU_-)X_-^\dagger$ for any right-inverse $X_-^\dagger$ of $X_-$. Hence, in that case the data are informative for observability (detectability) if and only if $X_-$ has full row rank, and the pair $((Y_- - DU_-)X_-^\dagger, (X_+ -BU_-) X_-^\dagger)$ is observable (detectable). The unknown $A_{\rm true}$ and $C_{\rm true}$ are then uniquely determined by the data.
\end{remark}

\subsection{Informativity for strong controllability and stabilizability}
For the system \eqref{eq:def sys ABCD}, the dual properties of strong obser\-vability and strong detectability are strong controllability and strong stabilizability. These properties can be defined  in terms of trajectories of the system. Here, for brevity, we define \eqref{eq:def sys ABCD} to be strongly controllable (strongly stabilizable) if the pair $(A +LC, B + LD)$ is controllable (stabilizable) for all $L$. From this it is immediate that \eqref{eq:def sys ABCD} is strongly controllable (strongly stabilizable)  if and only if the dual system
$(A^\top,C^\top,B^\top,D^\top)$ is strongly observable (strongly detectable). As before, assume that $B,C$ and $D$ are given, but that 
$A$ can be any matrix from the affine set $\calA := \{ A \in \mathbb{R}^{n\times n} \mid R = QAP \}$, where $P,Q$ and $R$ are given. Obviously, $A \in \calA$ if and only if $A^\top$ satisfies $R^\top = P^\top A^\top Q^\top$. 
The above observations make the following a matter of course.
\begin{cor} \label{thm:str cont hautus}
	Let $(P,Q,R)$ and $(B,C,D)$ be given. Then \eqref{eq:def sys ABCD} is strongly controllable for all $A\in\calA$ if and only if $\ker Q \subseteq B\ker D$ and for all $\lambda \in\mathbb{C}$
	\begin{equation}\label{eq:hautus str cont xyz}
	\rank\begin{pmatrix} R-\lambda QP & QB\\ CP &D \end{pmatrix} = \rank Q + \rank \begin{pmatrix} CP  & D \end{pmatrix}.
	\end{equation}  
	Similarly,  \eqref{eq:def sys ABCD} is strongly stabilizable for all $A\in\calA$ if and only if $\ker Q \subseteq B\ker D$ and \eqref{eq:hautus str cont xyz} holds for all $\lambda\in\mathbb{C}$ such that $|\lambda|\geq 1$. 	
\end{cor}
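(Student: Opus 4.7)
The plan is to derive this corollary from Theorem~\ref{thm:str obs hautus} by system-theoretic duality. As recorded in the paragraph preceding the corollary, $(A,B,C,D)$ is strongly controllable (resp.\ strongly stabilizable) if and only if the dual system $(A^\top,C^\top,B^\top,D^\top)$ is strongly observable (resp.\ strongly detectable). Moreover $A\in\calA$ if and only if $A^\top\in\calA'$, where
\[
\calA':= \{\tilde{A}\in\R^{n\times n}\mid R^\top = P^\top \tilde{A} Q^\top\}
\]
is of the form~\eqref{eq:def sig a} with the triple $(P,Q,R)$ replaced by $(Q^\top,P^\top,R^\top)$. So the assertion of the corollary is equivalent to strong observability (resp.\ detectability) of $(\tilde A,C^\top,B^\top,D^\top)$ holding for every $\tilde A\in\calA'$.

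I would then apply Theorem~\ref{thm:str obs hautus} with these dual data. Its conclusion becomes the subspace inclusion $(B^\top)^{-1}\im D^\top\subseteq\im Q^\top$ together with the rank condition
\[
\rank\begin{pmatrix} R^\top-\lambda P^\top Q^\top & P^\top C^\top\\ B^\top Q^\top & D^\top\end{pmatrix} = \rank Q^\top + \rank\begin{pmatrix} P^\top C^\top \\ D^\top\end{pmatrix}
\]
for all $\lambda\in\C$ (resp.\ all $\lambda$ with $|\lambda|\geq 1$).

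Finally, I would rewrite both conditions in the form asserted by the corollary. Using the elementary identities $(B^\top)^{-1}\im D^\top = (B\ker D)^\perp$ and $\im Q^\top=(\ker Q)^\perp$, taking orthogonal complements turns the subspace inclusion into $\ker Q\subseteq B\ker D$. The rank identity is handled by transposing: the $2\times 2$ block matrix displayed above is precisely the transpose of the one appearing in~\eqref{eq:hautus str cont xyz}, while $\rank Q^\top = \rank Q$ and the column stack $\begin{pmatrix} P^\top C^\top \\ D^\top\end{pmatrix}$ has the same rank as its transpose $\begin{pmatrix} CP & D\end{pmatrix}$. Invariance of rank under transposition then delivers~\eqref{eq:hautus str cont xyz}. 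The only substantive point is the orthogonal-complement manipulation, which is a one-line check ($B^\top x\in\im D^\top$ iff $\langle B^\top x,y\rangle =0$ for every $y\in\ker D$, iff $x\perp B\ker D$); everything else is routine transpose bookkeeping, so no serious obstacle is expected.
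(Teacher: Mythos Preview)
Your proposal is correct and follows exactly the approach the paper intends: the paper simply remarks that ``the above observations make the following a matter of course,'' meaning Corollary~\ref{thm:str cont hautus} follows from Theorem~\ref{thm:str obs hautus} via the duality $(A,B,C,D)\leftrightarrow(A^\top,C^\top,B^\top,D^\top)$ and $(P,Q,R)\leftrightarrow(Q^\top,P^\top,R^\top)$. You have merely spelled out the transpose bookkeeping and the orthogonal-complement identity $(B^\top)^{-1}\im D^\top=(B\ker D)^\perp$ that the paper leaves implicit.
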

%
Since a given pair $(A,B)$ is controllable (stabilizable) if and only if the quadruple $(A,B,0,0)$ is strongly controllable (strongly stabilizable), the following also follows immediately.
\begin{cor}[Uniform Hautus test] \label{cor:haut cont}
	Given $(P,Q,R)$ and $B$, the pair $(A,B)$ is controllable for all $A\in\calA$ if and only if $\ker Q \subseteq \im B$ and for any $\lambda \in\mathbb{C}$ 
	\begin{equation}\label{eq:hautus cont xyz}
	\rank\begin{pmatrix} R-\lambda QP & QB\end{pmatrix} = \rank Q.
	\end{equation}  
	Furthermore $(A,B)$ is stabilizable if and only if $\ker Q \subseteq \im B$ and  \eqref{eq:hautus cont xyz} holds for all $\lambda\in\mathbb{C}$ such that $|\lambda|\geq 1$. 
\end{cor}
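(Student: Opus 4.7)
The plan is to derive this corollary as an immediate specialization of Corollary~\ref{thm:str cont hautus} to the degenerate output case $C=0$, $D=0$. The key observation, already hinted at in the paragraph preceding the statement, is that controllability of the pair $(A,B)$ is equivalent to strong controllability of the quadruple $(A,B,0,0)$: with $C=0$ and $D=0$, the output-injected family $(A+LC,\,B+LD)$ collapses to the single pair $(A,B)$ for every $L$, so requiring it to be controllable for all $L$ is nothing but requiring $(A,B)$ itself to be controllable. Hence informativity of the affine set $\calA$ for controllability of $(A,B)$ is equivalent to informativity for strong controllability of $(A,B,0,0)$, and the identical reduction works for stabilizability by replacing ``controllable'' with ``stabilizable'' throughout.

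With this reduction in hand, the second step is to substitute $C=0$ and $D=0$ into the two conditions of Corollary~\ref{thm:str cont hautus} and simplify. The inclusion $\ker Q \subseteq B\ker D$ becomes $\ker Q \subseteq B\,\mathbb{R}^m = \im B$, since $\ker D = \mathbb{R}^m$ when $D = 0$. The rank identity \eqref{eq:hautus str cont xyz} reduces to
\[
\rank \begin{pmatrix} R-\lambda QP & QB \\ 0 & 0 \end{pmatrix} = \rank Q + \rank \begin{pmatrix} 0 & 0 \end{pmatrix},
\]
whose right-hand side equals $\rank Q$ and whose left-hand side equals $\rank \begin{pmatrix} R-\lambda QP & QB \end{pmatrix}$. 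This reproduces condition \eqref{eq:hautus cont xyz} verbatim. The stabilizability claim then follows in parallel by restricting $\lambda$ to $\{\lambda\in\mathbb{C}\mid |\lambda|\geq 1\}$, exactly as in the second half of Corollary~\ref{thm:str cont hautus}.

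I do not foresee a substantive obstacle: the heavy lifting, in particular the freedom-of-perturbation arguments within $\calA$, has already been absorbed into Theorem~\ref{thm:rank equivalences} and its specialization in Corollary~\ref{thm:str cont hautus}. The only points requiring a moment of care are (i) that the definitional equivalence between controllability of $(A,B)$ and strong controllability of $(A,B,0,0)$ is used consistently with the feedback-invariant definition of strong controllability given at the start of this subsection, and (ii) that the simplification $\ker D = \mathbb{R}^m$ relies on $D$ being the zero map, which is exactly our setting. Both are trivial, so the corollary follows ``immediately'' as the surrounding text asserts.
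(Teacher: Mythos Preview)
Your proposal is correct and follows essentially the same approach as the paper: the paper states that controllability of $(A,B)$ is equivalent to strong controllability of $(A,B,0,0)$ and that the corollary ``follows immediately'' from Corollary~\ref{thm:str cont hautus}, which is precisely the specialization $C=0$, $D=0$ you carry out. Your explicit simplifications of $\ker Q\subseteq B\ker D$ to $\ker Q\subseteq\im B$ and of the rank identity are exactly what the paper leaves implicit.
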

By appying the above in the context of informativity, we immediately obtain the following.
\begin{cor} \label{cor:strong cont and stab}
Let $(M~~N)$ be such that \eqref{eq:annihilator} holds. Given the data $(U_-,X,Y_-)$, let $R$ be given by \eqref{eq:Z}. The data  are informative for strong controllability if and only if \/ $\ker M \subseteq \im B$  and for all $\lambda \in\mathbb{C}$ we have
\begin{equation}\label{eq:hautus str cont data xyz}
	\rank\begin{pmatrix} R -\lambda MX_- & M B\\ CX_- & D \end{pmatrix} = \rank M + \rank (CX_-  ~~ D ).
\end{equation} 
The data are informative for strong stabilizability if and only if  $\ker M \subseteq \im B$ and \eqref{eq:hautus str cont data xyz} holds for all  $\lambda \in\mathbb{C}$ with $|\lambda| \geq 1$.
\end{cor}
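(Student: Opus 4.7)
The plan is to derive Corollary \ref{cor:strong cont and stab} by direct specialization of Corollary \ref{thm:str cont hautus}, mirroring the way Theorem \ref{th:strong obs and det} was obtained from Theorem \ref{thm:str obs hautus}.

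First I would rewrite $\calA_{\rm dat}$ in the affine form covered by the general framework of Section \ref{sec:rank property}. Fixing $(M~~N)$ with $\ker (M~~N) = \im \begin{pmatrix} E \\ F \end{pmatrix}$ and letting $R$ be defined by \eqref{eq:Z}, the compatibility requirement in \eqref{eq:compat} is equivalent, via left-multiplication by $(M~~N)$ together with the annihilator property, to the single equation $R = M A X_-$. Hence
\[
\calA_{\rm dat} = \{ A \in \mathbb{R}^{n\times n} \mid R = M A X_- \},
\]
which is precisely the affine set $\calA = \{A : R = QAP\}$ of Section \ref{sec:rank property} under the identifications $P = X_-$ and $Q = M$.

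The second step is to feed this identification into Corollary \ref{thm:str cont hautus}. Informativity of the data for strong controllability is, by definition, strong controllability of every $(A,B,C,D)$ with $A \in \calA_{\rm dat}$, which is exactly the hypothesis of that corollary applied to our $\calA$. Direct substitution then yields the stated subspace inclusion $\ker M \subseteq \im B$ and the rank condition \eqref{eq:hautus str cont data xyz}, using $QP = MX_-$, $QB = MB$, $CP = CX_-$, $\rank Q = \rank M$, and $\rank(CP~~D) = \rank(CX_-~~D)$. The strong stabilizability claim follows verbatim by restricting $\lambda$ to $|\lambda| \geq 1$, as covered by the second part of Corollary \ref{thm:str cont hautus}.

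Since Theorem \ref{thm:rank equivalences} and its dual Corollary \ref{thm:str cont hautus} already encapsulate all the substantive analysis, no new obstacle arises here; the corollary is a one-step formal translation. The only point requiring any real verification is that the $(M~~N)$-reduction genuinely captures the defining constraint of $\calA_{\rm dat}$, which in turn reduces to the elementary observation that a vector lies in $\im \begin{pmatrix} E \\ F \end{pmatrix}$ if and only if it is annihilated by $(M~~N)$.
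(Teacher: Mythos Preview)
Your approach is exactly the paper's: the corollary is stated there as an immediate consequence of Corollary~\ref{thm:str cont hautus} (``By applying the above in the context of informativity, we immediately obtain the following''), and you correctly carry out the specialization $P=X_-$, $Q=M$, $R$ as in \eqref{eq:Z}, relying on the already-established identification $\calA_{\rm dat}=\{A:\,R=MAX_-\}$.

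One point you pass over too quickly: you write that direct substitution ``yields the stated subspace inclusion $\ker M\subseteq\im B$'', but substituting $Q=M$ into the condition $\ker Q\subseteq B\ker D$ of Corollary~\ref{thm:str cont hautus} literally gives $\ker M\subseteq B\ker D$, not $\ker M\subseteq\im B$. Since $B\ker D\subseteq\im B$ with strict inclusion possible (take $D$ injective and $B\neq 0$), these are not equivalent in general. The discrepancy is in the \emph{statement} of the corollary rather than in your method; the substitution you describe is the right one, but what it actually produces is the stronger inclusion $\ker M\subseteq B\ker D$, which is the correct dual of $C^{-1}\im D\subseteq\im X_-$ from Theorem~\ref{th:strong obs and det}. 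Apart from this, your argument is complete.
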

\begin{cor} \label{cor:cont and stab}
Let $(M~~N)$ be such that \eqref{eq:annihilator} holds and let $R$ be given by \eqref{eq:Z}. The data $(U_-,X,Y_-)$ are informative for controllability if and only if \/ $\ker M \subseteq \im B$  and for all $\lambda \in\mathbb{C}$ we have
\begin{equation}\label{eq:hautus cont data xyz}
	\rank\begin{pmatrix} R-\lambda MX_- & M B \end{pmatrix} = \rank M.
\end{equation} 
The data are informative for stabilizability if and only if  $\ker M \subseteq \im B$ and \eqref{eq:hautus cont data xyz} holds for all  $\lambda \in\mathbb{C}$ with $|\lambda| \geq 1$.
\end{cor}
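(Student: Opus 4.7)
The plan is to deduce this corollary directly from Corollary~\ref{cor:haut cont}, the uniform Hautus test for controllability/stabilizability of an affine family of pairs $(A,B)$, by identifying $\calA_{\rm dat}$ with an affine set of the form $\{A\mid R=QAP\}$ for appropriately chosen $P,Q,R$.

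The first step is the reformulation of $\calA_{\rm dat}$. Starting from the defining equation \eqref{eq:compat}, $A\in\calA_{\rm dat}$ iff there exists $W_-$ such that
\begin{equation*}
\begin{pmatrix} X_+ - BU_- - AX_- \\ Y_- - CX_- - DU_- \end{pmatrix} = \begin{pmatrix} E \\ F \end{pmatrix} W_-.
\end{equation*}
Since $\ker(M~N)=\im\begin{pmatrix}E\\F\end{pmatrix}$ by \eqref{eq:annihilator}, the existence of such a $W_-$ is equivalent to the left-hand side lying in $\ker(M~N)$, i.e.\ to $R=MAX_-$ with $R$ as in \eqref{eq:Z}. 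Therefore, with the identifications $P:=X_-$ and $Q:=M$, we have
\begin{equation*}
\calA_{\rm dat} = \{\, A\in\mathbb{R}^{n\times n} \mid R=QAP\,\},
\end{equation*}
exactly matching the form \eqref{eq:def sig a} used in Section~III. Note that $\calA_{\rm dat}$ is nonempty because the true $A_{\rm true}$ lies in it, so the hypotheses of Corollary~\ref{cor:haut cont} are satisfied.

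The second step is to invoke Corollary~\ref{cor:haut cont} with this data. By definition, the data are informative for controllability precisely when the pair $(A,B)$ is controllable for every $A\in\calA_{\rm dat}$. Corollary~\ref{cor:haut cont} characterizes this as the conjunction of $\ker Q \subseteq \im B$ and the rank condition $\rank(R-\lambda QP\quad QB)=\rank Q$ for all $\lambda\in\mathbb{C}$. Substituting $P=X_-$, $Q=M$ yields exactly $\ker M\subseteq\im B$ together with
\begin{equation*}
\rank\begin{pmatrix} R-\lambda MX_- & MB\end{pmatrix} = \rank M
\end{equation*}
for all $\lambda\in\mathbb{C}$, which is \eqref{eq:hautus cont data xyz}. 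The stabilizability statement follows by the same argument, restricting $\lambda$ to $|\lambda|\geq 1$ in the second part of Corollary~\ref{cor:haut cont}.

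There is essentially no obstacle beyond the bookkeeping in the first step; the only point that deserves care is making sure that the annihilator matrix $(M~N)$ and the definition of $R$ really encode the compatibility condition as a single equation $R=MAX_-$, so that the parameters $(P,Q,R)$ from Section~III can be read off cleanly. Once this identification is in place, the corollary is an immediate specialization of the already-proved uniform Hautus test.
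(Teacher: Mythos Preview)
Your argument is correct and is precisely the route the paper takes: the paper already records (just before Theorem~\ref{th:strong obs and det}) that $A\in\calA_{\rm dat}$ iff $R=MAX_-$, and then states that Corollaries~\ref{cor:strong cont and stab} and~\ref{cor:cont and stab} follow ``immediately'' by applying the uniform Hautus tests in this context, which is exactly your specialization of Corollary~\ref{cor:haut cont} with $P=X_-$ and $Q=M$.
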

As before, in the special case of independent process and measurement noise,  Corollary \ref{cor:strong cont and stab} and \ref{cor:cont and stab} hold verbatim with $M$ such that $\ker M = \im E$ and $R$ given by \eqref{eq:newZ}. In this special case, the rank test for controllability and stabilizability can be simplified to 
$\rank M \begin{pmatrix} X_+ - \lambda X_- & B \end{pmatrix} = \rank M$
for all $\lambda \in \mathbb{C}$, and $\lambda \in \mathbb{C}$ with $|\lambda| \geq 1$, respectively.

If there is no process noise, in the rank tests \eqref{eq:hautus str cont data xyz} and \eqref{eq:hautus cont data xyz} we should take $M = I_n$ and $R=X_+ - B U_-$.  For this special case, the rank test for controllability and stabilizability can even be simplified to 
\begin{equation} \label{eq:even simpler}
\rank \begin{pmatrix} X_+ - \lambda X_- & B \end{pmatrix} = n
\end{equation}
for all $\lambda \in \mathbb{C}$, and  $\lambda \in \mathbb{C}$ with $|\lambda| \geq 1$, respectively.

\begin{remark}

	The rank test \eqref{eq:even simpler} can also be derived from \cite[Theorem 8]{vanWaarde2020}. Indeed,  that theorem states that all pairs $(A,B)$ that satisfy the linear equation $X_+ = AX_- +BU_-$ are controllable if and only if $\rank \begin{pmatrix} X_+-\lambda X_-\end{pmatrix}=n$ for all $\lambda\in\mathbb{C}$. 
	This result can be applied to our set up, where we assume that only $A$ is unknown and that $B$ is given. Indeed, by defining `new data' by
	\begin{equation} 	\label{eq:B known} 
	\tilde{X}_+:= \begin{bmatrix} X_+ & B \end{bmatrix}, \tilde{X}_-: = \begin{bmatrix} X_- & 0 \end{bmatrix}, \tilde{U}_-:= \begin{bmatrix} U_- & I_m  \end{bmatrix},
	\end{equation} 
we have that a matrix $A$ satisfies $X_+ = AX_- +BU_-$  if and only if $(A,B)$ satisfies $\tilde{X}_+ = A\tilde{X}_- +B\tilde{U}_-$. By applying\cite[Theorem 8]{vanWaarde2020} to the new data \eqref{eq:B known} we then get that $(A,B)$ is controllable for all $A$ satisfying  $X_+ = AX_- +BU_-$ if and only if \eqref{eq:even simpler} holds.
	\end{remark} 
\begin{example}
Again take as the true system the one specified in Example \ref{ex:ex1}. Also, let the data be given by 
\eqref{eq:specific data}. Note that the condition $\ker M \subset \im B$ is violated, so the data are neither informative for strong controllability nor for strong stabilizability. They are also not informative for controllability or stabilizability.
\end{example}

%
\section{A geometric approach to informativity} \label{sec:geometric}
It is well known, see for example \cite{Trentelman2001}, that observability and strong observability also allow tests in terms of certain subspaces of the state space, more specifically, the unobser\-vable subspace and weakly unobservable subspace. Properties of the weakly unobservable subspace also characterize left-invertibility of the system. In this section we will use these ideas to characterize informativity for strong observability, observability and left-invertibility. 

Again consider the system \eqref{eq:def sys ABCD}. We call a subspace $\calV \subseteq \mathbb{R}^n$ output-nulling controlled invariant  if
\begin{equation}  \label{eq:output nulling}
\begin{bmatrix} A \\ C \end{bmatrix} \calV \subseteq \calV \times \{0\} +\im \begin{bmatrix} B\\ D\end{bmatrix},
\end{equation}
(see \cite{Molinari1976,Trentelman2001}). Since any finite sum of such subspaces retains this property, there exists a unique largest output-nulling controlled invariant subspace, which will be denoted by  $\calV(A,B,C,D)$. This subspace is called the {\em weakly unobservable subspace} of the system \eqref{eq:def sys ABCD}. 
The system \eqref{eq:def sys ABCD} is strongly observable if and only if $\calV(A,B,C,D) = \{0\}$, see  \cite[pp. 159-160 and Theorem 7.16]{Trentelman2001}.

Now, again consider the situation that the matrices $B,C$ and $D$ are specified, but that $A$ can be any matrix from the affine set $\calA$ given by \eqref{eq:def sig a}, where the matrices $P \in \mathbb{R}^{n \times r}$, $Q\in \mathbb{R}^{\ell \times n}$ and $R\in \mathbb{R}^{\ell \times r}$ are given. We consider the set of all subspaces $\calJ \subseteq \mathbb{R}^r$ that satisfy the following inclusion:
\begin{equation}\label{eq:def Jstar}
\begin{bmatrix} R \\ CP \end{bmatrix} \calJ \subseteq QP\calJ  \times \{0\} +\im \begin{bmatrix} QB\\ D\end{bmatrix}.
\end{equation}
It is easily verified that any finite sum retains this property, and therefore there exists a largest subspace of $\mathbb{R}^r$ that satisfies the inclusion \eqref{eq:def Jstar}. We will denote this subspace by $\calJ^\star$.
\begin{remark}\label{rem: iterating Jstar}
	It is straightforward to check that $\calJ^\star$ can be found from $B,C,D, P,Q$ and $R$ in at most $r$ steps by letting $\calJ_0=\mathbb{R}^r$, and iterating
	\begin{equation}  \label{eq:iteration}
	\calJ_{t+1} = \begin{bmatrix} R \\ CP \end{bmatrix}^{-1}\left( QP\calJ_t  \times \{0\} +\im \begin{bmatrix} QB\\ D\end{bmatrix}\right).
	\end{equation}
\end{remark}
The following result will be instrumental in the remainder of this section.
\begin{theorem}\label{thm: Vstar and Jstar}
Let $(P,Q,R)$ and $(B,C,D)$ be such that $C^{-1}\im D \subseteq \im P$. Then the following hold: 
\begin{enumerate}
\item\label{item: theorem spaces 1} 
For all $A\in\calA$, we have $\calV(A,B,C,D)\subseteq P\calJ^\star$. 
\item\label{item: theorem spaces 2} 
There exists $\bar{A}\in\calA$ such that $P\calJ^\star \subseteq \calV(\bar{A},B,C,D)$.
\end{enumerate} 
\end{theorem}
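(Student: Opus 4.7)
The two inclusions will be proved by dual constructions. For (i) I would pull the weakly unobservable subspace of an arbitrary $A\in\calA$ back through $P$ and recognise the pullback as a subspace of $\mathbb{R}^r$ satisfying the inclusion \eqref{eq:def Jstar}. For (ii) I would realise that inclusion by a pair of linear friend maps $F\colon\calJ^\star\to\calJ^\star$ and $G\colon\calJ^\star\to\mathbb{R}^m$ and use them to construct an explicit $\bar A\in\calA$ making $P\calJ^\star$ output-nulling controlled invariant.

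For (i), fix $A\in\calA$ and set $\calV:=\calV(A,B,C,D)$. The output-nulling condition \eqref{eq:output nulling} forces $C\calV\subseteq\im D$, hence $\calV\subseteq C^{-1}\im D\subseteq\im P$ by hypothesis. Define $\calJ:=P^{-1}\calV$, so that $P\calJ=\calV$. For each $j\in\calJ$, applying output-nullness at $Pj\in\calV$ yields $v'\in\calV=P\calJ$ and $u\in\mathbb{R}^m$ with $APj=v'+Bu$ and $CPj=Du$. Writing $v'=Pj''$ with $j''\in\calJ$ and premultiplying the first equation by $Q$ (using $QAP=R$) produces $Rj=QPj''+QBu$ together with $CPj=Du$, which is precisely \eqref{eq:def Jstar} for $\calJ$. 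Hence $\calJ\subseteq\calJ^\star$ and $\calV=P\calJ\subseteq P\calJ^\star$.

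For (ii), I would choose $F$ and $G$ so that they vanish on $\calN:=\calJ^\star\cap\ker P$: for $j\in\calN$ both $Rj$ and $CPj$ vanish (the former because $\ker P\subseteq \ker R$, which is equivalent to $\calA\neq\emptyset$), so taking $Fj=0,\,Gj=0$ on $\calN$ is consistent, while on a complement $\calM$ of $\calN$ in $\calJ^\star$ the values of $F,G$ can be picked basis-wise directly from \eqref{eq:def Jstar}. With this extra requirement the rule $\bar A(Pj):=PFj+BGj$ descends to a well-defined linear map on $P\calJ^\star$, since $Pj=Pj'$ forces $j-j'\in\calN$ and $F,G$ both kill $\calN$. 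I would then extend $\bar A$ to $\im P$ using $\im R\subseteq \im Q$ (so every prescribed value of $Q\bar A$ on $\im P$ is attainable), and arbitrarily on a complement of $\im P$ in $\mathbb{R}^n$; this delivers $Q\bar AP=R$, i.e.\ $\bar A\in\calA$. Finally, for $v=Pj\in P\calJ^\star$ the formula gives $\bar Av=PFj+BGj$ with $PFj\in P\calJ^\star$ and $Cv=CPj=DGj$, which is exactly the output-nulling condition, so $P\calJ^\star\subseteq \calV(\bar A,B,C,D)$.

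The main obstacle is precisely the well-definedness of $\bar A$ on $P\calJ^\star$: a priori $PFj+BGj$ need not depend only on $Pj$. Resolving this is what forces the additional requirement $F|_{\calN}=G|_{\calN}=0$, and the reason this can be arranged is precisely the standing condition $\ker P\subseteq\ker R$ (which, together with $\im R\subseteq\im Q$, also makes the free extension of $\bar A$ consistent with $\bar A\in\calA$). The remaining hypothesis $C^{-1}\im D\subseteq\im P$ is what allows (i) to start by placing $\calV$ inside $\im P$.
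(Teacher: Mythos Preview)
Your argument for part~(i) coincides with the paper's: pull $\calV$ back through $P$ (legitimate because $\calV\subseteq C^{-1}\im D\subseteq\im P$), verify that the pullback satisfies \eqref{eq:def Jstar}, and invoke maximality of $\calJ^\star$.

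For part~(ii) your construction is correct but follows a genuinely different route. The paper fixes an arbitrary $A\in\calA$, picks a basis $\{x_1,\dots,x_k\}$ of $P\calJ^\star$, writes $Ax_i+Bu_i=y_i+z_i$ with $y_i\in P\calJ^\star$ and $z_i\in\ker Q$, and then corrects $A$ by an $A_0$ with $A_0x_i=-z_i$ and $QA_0P=0$; the resulting $\bar A=A+A_0$ lies in $\calA$ and makes $P\calJ^\star$ output-nulling. By working directly on a basis of $P\calJ^\star$, the paper never has to address the descent issue you single out. Your approach instead builds $\bar A$ from scratch via friend maps $F,G$ on $\calJ^\star$, and you are right that the well-definedness of $Pj\mapsto PFj+BGj$ forces $F,G$ to vanish on $\calN=\calJ^\star\cap\ker P$; the condition $\ker P\subseteq\ker R$ (half of $\calA\neq\emptyset$) is exactly what makes this possible, and $\im R\subseteq\im Q$ (the other half) is what lets you extend $\bar A$ to $\im P$ consistently with $Q\bar AP=R$. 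The paper's route is shorter and hides both consistency conditions inside the single assumption that some $A\in\calA$ exists; yours is a bit longer but exposes precisely where each hypothesis enters. One point you leave implicit and should make explicit in a full write-up: on $P\calJ^\star$ the formula $\bar A(Pj)=PFj+BGj$ already satisfies $Q\bar A(Pj)=Rj$ (by the defining relation $Rj=QPFj+QBGj$), so the extension to $\im P$ does not conflict with what has already been fixed.
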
 
\begin{proof}
 \eqref{item: theorem spaces 1}: Assume that $C^{-1}\im D \subseteq \im P $ holds. Let $A\in \calA$ and let $\calV\subseteq\mathbb{R}^n$ be an output nulling controlled invariant subspace. 
Note that $C\calV \subseteq\im D$, and therefore there exists a subspace $\calJ$ such that $\calV=P\calJ$. We now see that
\[
\begin{bmatrix} R \\ CP \end{bmatrix} \calJ = \begin{pmatrix} Q& 0\\ 0& I \end{pmatrix}\begin{bmatrix} A\\ C\end{bmatrix} P\calJ \subseteq QP\calJ  \times \{0\} +\im \begin{bmatrix} QB\\ D\end{bmatrix}.
\]
Due to the definition of $\calJ^\star$, we then obtain $\calV(A,B,C,D) \subseteq P\calJ^\star$. 

\eqref{item: theorem spaces 2}: Let $\calJ$ satisfy \eqref{eq:def Jstar}. Then for any $A\in\calA$ and $x \in P\calJ$ there exists $u \in \mathbb{R}^m$ such that:
\[ Cx+Du =0, \qand QAx +QBu \in QP\calJ. \]
This implies that 
\[ 
Ax+Bu \subseteq Q^{-1}Q(Ax+Bu) \subseteq Q^{-1}QP\calJ = P\calJ + \ker Q. 
\]
Now let $\{x_1,...,x_k\}$ be a basis of the subspace $P\calJ$. We can use the previous to write $Ax_i+Bu_i = y_i+z_i$, where $Cx_i+Du_i=0$, $y_i\in P\calJ$ and $z_i\in \ker Q$. Let $A_0$ be any real $n \times n$ matrix such that $A_0x_i= -z_i$ for $i=1,\ldots,k$ and $QA_0P=0$. Then, if we define $\bar{A}=A+A_0$, we see that $\bar{A}\in \calA$. By definition $\bar{A}x_i+Bu_i =y_i \in P\calJ$, and therefore, if we write $\calV = P\calJ$, we have: 
\[
\begin{bmatrix} \bar{A} \\ C \end{bmatrix} \calV \subseteq \calV \times \{0\} +\im \begin{bmatrix} B\\ D\end{bmatrix}.
\]
Therefore $P\calJ\subseteq \calV(\bar{A},B,C,D)$, proving that $P\calJ^\star\subseteq \calV(\bar{A},B,C,D)$.  
\end{proof} 

Using Theorem \ref{thm: Vstar and Jstar} we immediately obtain the following.
\begin{theorem}\label{thm:str obs spaces}
	Let $(B,C,D)$ and $(P,Q,R)$ be given. Then the system \eqref{eq:def sys ABCD} is strongly observable for all $A\in\calA$ if and only if $C^{-1}\im D\subseteq \im P$ and $\calJ^\star \subseteq \ker P$.
\end{theorem}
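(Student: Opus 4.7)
The plan is to derive this statement essentially as a corollary of Theorem~\ref{thm: Vstar and Jstar}, using the well-known geometric characterization: the system \eqref{eq:def sys ABCD} is strongly observable if and only if $\calV(A,B,C,D)=\{0\}$. The two directions then become very short once one has the right handle on the inclusion $C^{-1}\im D \subseteq \im P$.

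For the sufficiency direction, I would assume $C^{-1}\im D \subseteq \im P$ and $\calJ^\star \subseteq \ker P$. The second assumption immediately gives $P\calJ^\star = \{0\}$. Fixing an arbitrary $A\in\calA$, part~(1) of Theorem~\ref{thm: Vstar and Jstar} applies (its hypothesis $C^{-1}\im D \subseteq \im P$ is met), yielding $\calV(A,B,C,D) \subseteq P\calJ^\star = \{0\}$. Hence $\calV(A,B,C,D)=\{0\}$, which is equivalent to strong observability of $(A,B,C,D)$. Since $A$ was arbitrary in $\calA$, strong observability holds for all $A\in\calA$.

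For the necessity direction, I would first extract the inclusion $C^{-1}\im D \subseteq \im P$ from Theorem~\ref{thm:str obs hautus}: that theorem already asserts that strong observability of $(A,B,C,D)$ for all $A\in\calA$ forces $C^{-1}\im D \subseteq \im P$ (alternatively, this can be obtained by the short direct construction used in the $(\Rightarrow)$ part of the proof of Theorem~\ref{thm:rank equivalences}, where a vector $\hat x \in C^{-1}\im D\setminus\im P$ is used to build a witness $\bar A\in\calA$ with nontrivial weakly unobservable subspace). With this inclusion in hand, the hypotheses of Theorem~\ref{thm: Vstar and Jstar} are satisfied, so its part~(2) produces some $\bar A\in\calA$ with $P\calJ^\star \subseteq \calV(\bar A,B,C,D)$. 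By assumption $(\bar A,B,C,D)$ is strongly observable, so $\calV(\bar A,B,C,D)=\{0\}$, whence $P\calJ^\star = \{0\}$, i.e.\ $\calJ^\star \subseteq \ker P$.

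I do not expect any genuine obstacle here; the only thing that requires a little care is that Theorem~\ref{thm: Vstar and Jstar} is stated only under the standing assumption $C^{-1}\im D \subseteq \im P$, so in the necessity direction this inclusion must be secured first (by citing Theorem~\ref{thm:str obs hautus}) before the geometric part of the argument is invoked. Once that point is handled, the proof reduces to two one-line applications of the sandwich $\calV(A,B,C,D) \subseteq P\calJ^\star \subseteq \calV(\bar A,B,C,D)$ provided by Theorem~\ref{thm: Vstar and Jstar}.
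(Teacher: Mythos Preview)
Your proposal is correct and follows exactly the paper's own proof: first invoke Theorem~\ref{thm:str obs hautus} to secure the necessary condition $C^{-1}\im D\subseteq \im P$, and then apply the two parts of Theorem~\ref{thm: Vstar and Jstar} together with the characterization $\calV(A,B,C,D)=\{0\}$ of strong observability. The paper states this in one sentence, but the underlying argument is identical to yours.
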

\begin{proof} From Theorem~\ref{thm:str obs hautus} we see that $C^{-1}\im D\subseteq \im P$ is a necessary condition. The rest follows from Theorem~\ref{thm: Vstar and Jstar}.
\end{proof}
The procedure can be mimicked in order to characterize observability. For the system \eqref{eq:our observed system}, the unobservable subspace $\calN$ is the largest $A$-invariant subspace contained in $\ker C$, and \eqref{eq:our observed system} is observable if and only if $\calN = \{0\}$. In the situation that only $C$  and matrices $(P,Q,R)$ are given, while $A$ can be any matrix in the affine set $\calA$, we should look at the largest subspace $\calL \subseteq \mathbb{R}^r$ with the properties that 
\begin{equation} \label{eq:unobservable subspace}
R\calL \subseteq QP \calL \mbox{ and } CP\calL = \{0\}.
\end{equation}
Denote this subspace by $\calL^\star$ Then we have 
\begin{cor}\label{cor:obs spaces}
	Given $(P,Q,R)$ and $C$, then \eqref{eq:our observed system} is observable for all $A\in\calA$ if and only if $\ker C\subseteq \im P$ and $\calL^{\star} \subseteq \ker P$.
\end{cor}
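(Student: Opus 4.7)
The plan is to derive this corollary by specializing Theorem~\ref{thm:str obs spaces} to the case $B = 0$ and $D = 0$. First I would verify that setting $B = 0$ and $D = 0$ in \eqref{eq:def sys ABCD} recovers the input-free system \eqref{eq:our observed system}, and that strong observability of $(A,0,C,0)$ coincides with observability of the pair $(C,A)$. This is immediate from the definition in Section~III-A: with no input, the output $\bmy(t,x_0,\bmu) = CA^t x_0$ is independent of $\bmu$, so the condition ``$\bmy(t,x_0,\bmu) = 0$ for all $t$ and all $\bmu$ implies $x_0 = 0$'' collapses to the standard observability condition for $(C,A)$.

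Next I would track how the two key objects in Theorem~\ref{thm:str obs spaces} simplify. The hypothesis $C^{-1}\im D \subseteq \im P$ becomes exactly $\ker C \subseteq \im P$, matching the condition in the corollary. More importantly, the defining inclusion \eqref{eq:def Jstar} for $\calJ^\star$ reduces to
\[
\begin{bmatrix} R \\ CP \end{bmatrix} \calJ \subseteq QP\calJ \times \{0\},
\]
which is plainly equivalent to the pair of conditions $R\calJ \subseteq QP\calJ$ and $CP\calJ = \{0\}$. These are precisely the defining conditions \eqref{eq:unobservable subspace} of $\calL^\star$, so in this specialization $\calJ^\star = \calL^\star$. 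The iteration in Remark~\ref{rem: iterating Jstar}, with $B = 0$ and $D = 0$, likewise collapses to the standard recursion for the ``unobservable subspace'' of the affine family, confirming the identification.

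With these identifications in hand, the corollary is a direct consequence of Theorem~\ref{thm:str obs spaces} applied to $(B,C,D) = (0,C,0)$: observability of \eqref{eq:our observed system} for all $A \in \calA$ is equivalent to $\ker C \subseteq \im P$ together with $\calL^\star \subseteq \ker P$. I do not anticipate any real obstacle; the argument is a bookkeeping exercise in the $B = 0$, $D = 0$ specialization, the only mild subtlety being the coincidence of strong observability with plain observability in the absence of inputs, which was handled above.
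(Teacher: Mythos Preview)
Your proposal is correct and matches the paper's intended argument: the paper states that ``the procedure can be mimicked'' for observability and presents the corollary without further proof, and your specialization $B=0$, $D=0$ in Theorem~\ref{thm:str obs spaces} is exactly that mimicking made explicit. The identifications $C^{-1}\im D = \ker C$, $\calJ^\star = \calL^\star$, and strong observability of $(A,0,C,0)$ $=$ observability of $(C,A)$ are all straightforward and correctly handled.
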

The subspace $\calL^\star$ is obtained in at most $r$ steps by applying the iteration \eqref{eq:iteration} with $B =0$ and $D = 0$.

We now very briefly put the above in the context of informativity of input-state-output data. As before, let 
$(U_-,X, Y_-)$ be the noisy data obtained from the system \eqref{eq:full system}. Let $(M~~N)$ be any matrix such that \eqref{eq:annihilator} holds. Then, by Theorem \ref{thm: Vstar and Jstar}, these data are informative for strong observability of \eqref{eq:def sys ABCD} if and only if $C^{-1}\im D \subseteq \im X_-$ and $\calJ^\star \subseteq \ker X_-$, where $\calJ^\star$ is the largest subspace satisfying \eqref{eq:def Jstar} with $R$ given by \eqref{eq:Z}, $P = X_-$ and $Q = M$. 
Likewise, informativity for observability holds if and only if $\ker C \subseteq \im X_-$ and $\calL^\star \subseteq \ker X_-$.

Obviously, the above can, again, be dualized to obtain alternative tests for informativity for controllability and strong controllability. We omit the details here. Instead, we will turn to informativity for the property of left-invertibility of the system \eqref{eq:def sys ABCD} now. We briefly recall the definition.
\begin{definition}
The system \eqref{eq:def sys ABCD} is called \textit{left-invertible} if for each input sequence $\bmu$ the following holds: $\bmy(t, 0, \bmu)=0$ for all $t \in \mathbb{Z}_+$ implies that $\bmu(t) =0$ for all $t \in \mathbb{Z}_+$. 
\end{definition}

The following characterization of left-invertibility was given in \cite[Thm. 8.26]{Trentelman2001}.
\begin{prop} 
	The following are equivalent: 
	\begin{enumerate} 
		\item The system \eqref{eq:def sys ABCD} is left-invertible.
		\item  $\calV(A,B,C,D)\cap B\ker D = \{0\}$ and $\begin{pmatrix} B \\D \end{pmatrix}$ has full column rank.
	\end{enumerate}
\end{prop}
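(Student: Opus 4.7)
The plan is to prove both directions directly using the characterization of the weakly unobservable subspace $\calV(A,B,C,D)$ as the set of initial states from which there exists an input producing zero output, and the fact that from any point in $\calV$ one can select an input keeping the trajectory in $\calV$ while maintaining zero output (this is the meaning of output-nulling controlled invariance, recalled in \eqref{eq:output nulling}).

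For the direction $(2)\Rightarrow (1)$, I would suppose $\bmu$ is such that $\bmy(t,0,\bmu)=0$ for all $t\in\mathbb{Z}_+$ and show $\bmu\equiv 0$ by induction on $t$. At $t=0$, the equation $0=\bmy(0)=C\cdot 0 + D\bmu(0)$ gives $\bmu(0)\in\ker D$, so $\bmx(1)=B\bmu(0)\in B\ker D$. Since the shifted trajectory starting at $\bmx(1)$ also produces zero output for all times, $\bmx(1)\in\calV(A,B,C,D)$. Hypothesis $\calV\cap B\ker D=\{0\}$ forces $B\bmu(0)=0$, and combined with $D\bmu(0)=0$ the full column rank of $\begin{pmatrix} B\\ D\end{pmatrix}$ gives $\bmu(0)=0$. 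The inductive step is identical once I note that the state at any time $t$ equals $\bmx(t,0,\sigma^0\bmu)$ for the appropriate shifted input, so the same argument applies.

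For the contrapositive $(1)\Rightarrow (2)$, I distinguish two cases. If $\begin{pmatrix} B\\ D\end{pmatrix}$ fails to have full column rank, pick $u\neq 0$ with $Bu=0$ and $Du=0$; then the input $\bmu(0)=u$, $\bmu(t)=0$ for $t\geq 1$ produces zero output from zero initial state, violating left-invertibility. If instead $\calV\cap B\ker D\neq\{0\}$, pick nonzero $x\in\calV\cap B\ker D$, write $x=Bu_0$ with $u_0\in\ker D$ (note $u_0\neq 0$ since $x\neq 0$), and apply $\bmu(0)=u_0$: then $\bmy(0)=Du_0=0$ and $\bmx(1)=Bu_0=x\in\calV$. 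Using output-nulling controlled invariance of $\calV$, I can now recursively choose $\bmu(t)$ for $t\geq 1$ so that $\bmx(t)\in\calV$ and $\bmy(t)=0$ throughout, producing a zero-output trajectory from zero initial state with $\bmu(0)\neq 0$.

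The main subtlety, and the only step that is not essentially bookkeeping, is the claim that $\bmx(1)\in\calV(A,B,C,D)$ in the forward direction and the ability to extend the input in the second case; both rely on the same basic fact that $\calV$ is precisely the set of states from which there exists an input producing identically zero output. That equivalence between the geometric definition via \eqref{eq:output nulling} and the trajectory-based description is the nontrivial ingredient, but it is exactly the content of the characterization cited from \cite[pp.\ 159--160 and Theorem 7.16]{Trentelman2001} already invoked in the text, so it can be used as a black box here.
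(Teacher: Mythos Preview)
Your argument is correct. The paper does not actually prove this proposition: it simply quotes it as \cite[Thm.\ 8.26]{Trentelman2001}, so there is no ``paper's own proof'' to compare against. Your direct trajectory-based argument is the standard one and relies only on the equivalence between the geometric description \eqref{eq:output nulling} of $\calV(A,B,C,D)$ and its characterization as the set of initial states admitting an input that produces identically zero output; that equivalence is indeed available from the same reference, so invoking it as a black box is legitimate. One cosmetic remark: in the inductive step you write ``$\sigma^0\bmu$'' where you presumably mean the shifted input $\sigma\bmu$ (i.e.\ $\bmu'(t)=\bmu(t+1)$), applied from the zero state reached at time $1$ once $\bmu(0)=0$ has been established; with that correction the induction is clean.
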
 
\noindent The next result then, again, follows from Theorem \ref{thm: Vstar and Jstar}.
\begin{theorem}\label{thm:l-inv spaces}
	Let $(P,Q,R)$ and $(B,C,D)$ be given. Assume that $C^{-1} \im D \subseteq \im P$. Then the system \eqref{eq:def sys ABCD} is left-invertible for all $A\in\calA$ if and only if $P\calJ^\star \cap B\ker D =\{0\}$
	and $\begin{pmatrix} B \\D \end{pmatrix}$ has full column rank.
\end{theorem}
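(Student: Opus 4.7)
The plan is to reduce the statement to a direct application of Theorem~\ref{thm: Vstar and Jstar} by invoking the characterization of left-invertibility in the preceding proposition. That proposition expresses left-invertibility as the conjunction of two conditions: (a) $\calV(A,B,C,D)\cap B\ker D=\{0\}$, and (b) $\begin{pmatrix} B \\ D \end{pmatrix}$ has full column rank. Condition (b) depends only on the given matrices $B$ and $D$, not on $A$, so it is identical on both sides of the claimed equivalence and can be factored out; the whole proof then reduces to showing that (a) holds for every $A\in\calA$ if and only if $P\calJ^\star\cap B\ker D=\{0\}$.

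For the "if" direction, I would assume $P\calJ^\star\cap B\ker D=\{0\}$ together with $C^{-1}\im D\subseteq \im P$, and invoke part~\eqref{item: theorem spaces 1} of Theorem~\ref{thm: Vstar and Jstar} to conclude that $\calV(A,B,C,D)\subseteq P\calJ^\star$ for every $A\in\calA$. Intersecting both sides with $B\ker D$ immediately yields $\calV(A,B,C,D)\cap B\ker D\subseteq P\calJ^\star\cap B\ker D=\{0\}$, which is precisely condition~(a) for every $A\in\calA$.

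For the "only if" direction, I would assume that \eqref{eq:def sys ABCD} is left-invertible for every $A\in\calA$, so in particular condition~(a) holds for every such $A$. By part~\eqref{item: theorem spaces 2} of Theorem~\ref{thm: Vstar and Jstar}, the hypothesis $C^{-1}\im D\subseteq \im P$ provides a specific $\bar A\in\calA$ with $P\calJ^\star\subseteq \calV(\bar A,B,C,D)$. Applying condition~(a) to this $\bar A$ gives $\calV(\bar A,B,C,D)\cap B\ker D=\{0\}$, and the inclusion just noted then forces $P\calJ^\star\cap B\ker D=\{0\}$.

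The argument is mostly mechanical once Theorem~\ref{thm: Vstar and Jstar} is in hand; I do not anticipate genuine obstacles. The one subtlety worth spelling out is that both directions rely on the standing hypothesis $C^{-1}\im D\subseteq \im P$: the forward direction needs it only to invoke part~\eqref{item: theorem spaces 2} (to exhibit $\bar A$), while the reverse direction needs it only to invoke part~\eqref{item: theorem spaces 1}. The decoupling of the full-column-rank condition on $\begin{pmatrix} B \\ D \end{pmatrix}$ from the geometric condition is what makes the reduction clean, since neither $B$ nor $D$ varies with $A$.
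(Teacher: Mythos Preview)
Your proposal is correct and follows exactly the route the paper intends: the paper's own proof consists of the single sentence ``follows from Theorem~\ref{thm: Vstar and Jstar},'' and you have simply spelled out the two implications using parts~\eqref{item: theorem spaces 1} and~\eqref{item: theorem spaces 2} of that theorem together with the proposition characterizing left-invertibility. The only minor point worth making explicit is that factoring out condition~(b) tacitly uses that $\calA$ is nonempty (so that failure of~(b) makes both sides of the equivalence false), but this is a standing assumption in the paper.
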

As before, this can immediately be applied in the context of informativity. We omit the details.
\begin{remark}
	Note that Theorem \ref{thm:l-inv spaces} requires $C^{-1} \im D \subseteq \im P$, which, unfortunately, for left-invertibility for all $A \in \calA$ is not a necessary condition. This can be seen, for example, by taking $D=I$. Then, regardless of our choice of $(P,Q,R)$, $B$ and $C$, we see that \eqref{eq:def sys ABCD} is left-invertible for all $A\in\calA$. However, in this case $C^{-1}\im D = \mathbb{R}^n$, so the condition  $C^{-1} \im D \subseteq \im P$ is violated if $P$ does not have full row rank.
\end{remark}
To conclude this section, we note that Theorem \ref{thm:l-inv spaces} can be dualized in a straightforward way to obtain a characterization of right-invertibility for all $A\in\calA$, and conditions for informativity of data for right-invertibility. Again, we omit the details. 

To illustrate the the theory developed in this section we give the following example.

\begin{example} 
Consider the system \eqref{eq:full system} with
\[
A_{\textrm{true}} = \begin{bmatrix} 0 & 1 & 0 & 0 \\ 0 & 0 &1  & 0 \\ 0 & 0& 0& 1 \\ 0&0& 0&0 \end{bmatrix} , \quad B= \begin{bmatrix} 1 \\ 0 \\0 \\0 \end{bmatrix},\quad E= \begin{bmatrix} 0 \\ 0 \\0 \\1 \end{bmatrix}, \]
\[C= \begin{bmatrix} 1 & 0& 0& 0 \end{bmatrix}, \quad D = 0,\quad F=0.
\] 
Let data be given by
\[ X= \begin{bmatrix} 0&0&0&5 \\ 0&0&1&0\\0&1&0&0\\1&0&0&0\end{bmatrix}, \;U_-=\begin{bmatrix} 0 &0&4\end{bmatrix}, \; Y_- = \begin{bmatrix} 0 &0&0 \end{bmatrix}. \] 
Since there is only process noise, we should take $M$ such that $\ker M = \im E$. Define
\[ 
M :=  \begin{bmatrix} 1 & 0&0&0 \\ 0&1&0&0 \\0&0&1&0 \\0&0&0&0 \end{bmatrix}. 
\]
Then
\[
R = M (X_+ - BU_-) = \begin{bmatrix} 0&0&1\\0&1&0\\1&0&0 \\ 0 & 0 & 0 \end{bmatrix}.
\]
It is easily verified that 
\[ 
\calA_{\rm dat} = \left\lbrace \begin{bmatrix} a_{11} &1 & 0& 0 \\ a_{21} & 0 & 1 &0 \\ a_{31} &0&0&1 \\ a_{41} & a_{42} & a_{43} & a_{44} \end{bmatrix} \mid a_{ij} \in \mathbb{R} \right\rbrace .
\]
Note that $C^{-1}\im D \subseteq \im X_-$. In this case $\calJ^\star = \mathbb{R}^3$, and therefore the data are not informative for strong observability.
On the other hand, $\calL^\star=\zset$, proving that we do have informativity for observability. 

If we modify our system by taking $B = e_i$, the $i$th standard basis vector in $\mathbb{R}^4$ ($i = 2,3,4$), and adapt the data $X$ accordingly, we get 
$\calJ^\star=\mathbb{R}^{4-i}\times \{0\}^{i-1}$. This means that $X_- \calJ^\star= \{0\}^i\times \mathbb{R}^{4-i}$. Thus, only for $i=4$, the data are informative for strong observability. For $i = 2,3,4$ the data are informative for left-invertibility.

\end{example}

\section{Conclusions} \label{sec:conclusions}
In this paper we have given necessary and sufficient conditions for informativity of noisy data obtained from a given unknown system for a range of system properties. These conditions are in terms of rank tests on polynomial matrices that can be constructed from these noisy data. The main instrument used to obtain these tests was a general theorem that expresses a rank property of the Rosenbrock system matrix of an unknown system in terms of a polynomial matrix that collects available information about that system. We have also set up a geometric framework for informativity analysis. Within that framework we have found geometric tests for informativity of data for strong observability, observability, and left-invertibility. 

Within the framework of this paper, no assumptions are made on the noise samples, and in that sense our noise model is very general. On the other hand, complete knowledge on how the noise influences the system is assumed to be available (via $E$ and $F$). A drawback of this noise model is that in some situations it may not be possible to draw conclusions on the system on the basis of data. For example, if within our framework $E =I$  and $F=0$, it is impossible to draw conclusions from data, no matter how many input/state/output samples have been collected. An interesting problem for future research would therefore be to investigate data-driven analysis from noisy data under the assumption that the noise samples $W_-$ are bounded. Relevant noise models with bounded noise samples have, for example, been proposed in \cite{DePersis2020,Berberich2020,Koch2020,vanWaarde2020b}.

As an extension of the work in this paper we also see its generalization to the case that, apart from the $A$-matrix of the unknown system, also (parts of) the matrices $B$, $C$ and $D$ are unknown. Finally, it would be interesting to apply Theorem \ref{thm:rank equivalences} in the context of structured systems, where specific entries of the system matrices are constrained to satisfy certain linear equations, and the remaining entries are arbitrary. 
%
%

\bibliographystyle{plain}
\bibliography{references.bib}

 \end{document}